\renewcommand{\theequation}{\thesection.\arabic{equation}}
\newtheorem{theorem}{Theorem}[section]
\newtheorem{proposition}[theorem]{Proposition}
\newtheorem{definition}[theorem]{Definition}
\font\bigbf=cmbx10 scaled \magstep3
\begin{document}

\title{\bigbf  Fairness in online vehicle-cargo matching: An intuitionistic fuzzy set theory and tripartite evolutionary game approach}

 \author{Binzhou Yang
 \quad Ke Han$\thanks{Corresponding author, e-mail: kehan@swjtu.edu.cn;}$
 \quad Wenrui Tu
 \quad Qian Ge
 \\\\
 \textit{\small Institute of System Science and Engineering, School of Transportation and Logistics,}\\
 \textit{\small Southwest Jiaotong University}
 }

\maketitle

\begin{abstract}

This paper explores the concept of fairness and equitable matching in an on-line vehicle-cargo matching setting, addressing the varying degrees of satisfaction experienced by shippers and carriers. Relevant indicators for shippers and carriers in the on-line matching process are categorized as attributes, expectations, and reliability, which are subsequent quantified to form satisfaction indicators. Employing the intuitionistic fuzzy set theory, we devise a transformed vehicle-cargo matching optimization model by combining the fuzzy set's membership, non-membership, and uncertainty information. Through an adaptive interactive algorithm, the matching scheme with fairness concerns is solved using CPLEX. The effectiveness of the proposed matching mechanism in securing high levels of satisfaction is established by comparison with three benchmark methods. To further investigate the impact of considering fairness in vehicle-cargo matching, a shipper-carrier-platform tripartite evolutionary game framework is developed under the waiting response time cost (WRTC) sharing mechanism. Simulation results show that with fairness concerns in vehicle-cargo matching, all stakeholders are better off: The platform achieves positive revenue growth, and shippers and carriers receive positive subsidy. This study offers both theoretical insights and practical guidance for the long-term and stable operation of the on-line freight stowage industry.
\end{abstract} 

\noindent {\it Keywords: Vehicle-cargo matching; intuitionistic fuzzy set theory; evolutionary game; fairness concern; cost sharing} 

\section{Introduction}

In recent years, the swift advancement of logistics information integration platforms has facilitated the digitization and informatization of logistics activities, promoting real-time sharing of supply and demand information in the sector. This progress in online freight stowage has enhanced information transparency and, to some extent, reduced freight costs compared to traditional freight transportation forms. What followed are various researches on online freight stowage, mainly focusing on operation modes \citep{HAWJGSW2015,YLXDBL2017}, vehicle-cargo matching decision-making \citep{LZDY2020,WLGGW2020,TWMLG2022} and path planning \citep{HQRD2015,BI2019,TAJD2020,MDTA2023,SGH2023,RC2023}.

Vehicle-cargo matching problems are central to online freight stowage platforms. As a quintessential example of bilateral matching, their primary objective is to facilitate transactions between shippers and carriers. Existing studies highlight that the efficiency of road transportation, based on its resources benefits from the effective operation of logistics information platforms \citep{AAS2003}. Indeed, service providers should strive to enhance their innovation capabilities, enabling them to compete more effectively \citep{B2010}. Consequently, it is advantageous for online platforms to integrate freight capacity resources and augment the comprehensive efficiency of freight stowage systems by pursuing innovative vehicle-cargo matching mechanisms.

Fairness concerns have gained significant attention in the fields of logistics and supply chain management. It is important to note that the concept of fairness is largely contingent on a stakeholder's subjective perception within a specific decision-making context. Behavioral economics research demonstrates that perceptions of unfairness influence individuals' willingness to engage in certain behaviors \citep{KKT1986,HB2013}. Moreover, significant disparities in participants' payoffs under a given outcome can lead to the rejection of that solution \citep{GSS1982}. In fact, when the vehicle-cargo matching scheme is inclined towards unilateral matching subjects, it shatters the online freight transaction interactions, impeding the optimal allocation of freight resources within the online freight industry. This can result in protests and strikes, ultimately terminating matching transactions between carriers and shippers \citep{SoHu2023, Zhihu2023}. In this study, referring to classical criterion \citep{KM2015, HSZ2016}, we view fairness as the unequal status of parties within a match, leading to differing perceptions (satisfaction) of the same matching outcome. However, to our knowledge, no previous studies have considered fairness in an online vehicle-cargo matching context. This paper explores the concept of fairness in an online vehicle-cargo matching setting, addressing the varying degrees of satisfaction experienced by shippers and carriers. In real-life vehicle-cargo matching scenarios, the satisfaction degree of unilateral matching subjects is often uncertain or ambiguous. To more accurately capture the nebulous nature of bilateral matching decisions, intuitionistic fuzzy set theory has been extensively investigated in various contexts \citep{JWZZZ2021,LYL2021}. Motivated by this, our study focuses on modeling the unequal matching status perception in conjunction with intuitionistic fuzzy set theory (see Section \ref{3.0} for more details).

The subsequent impact and effectiveness test of fairness concern in practical application is tightly associated with stakeholders' decision-making environments. In our study, the online vehicle-cargo matching platform serves as the central facilitator of two-sided transactions between carriers and shippers. The platform plays a critical role in developing vehicle-cargo matching schemes that consider fairness. We then examine stakeholders' strategic decision-making responses to the implemented vehicle-cargo matching scheme, based on cost-sharing contracts. Evidently, the platform possesses greater authority in the vehicle-cargo matching scheme formulation process, with carriers and shippers as recipients. As carriers and shippers make strategic selections in response to the platform's matching scheme, the impact of the scheme's development and implementation on the strategic selection cannot be overlooked.

\begin{figure}[!htbp]
\centering
\includegraphics[width=.98\textwidth]{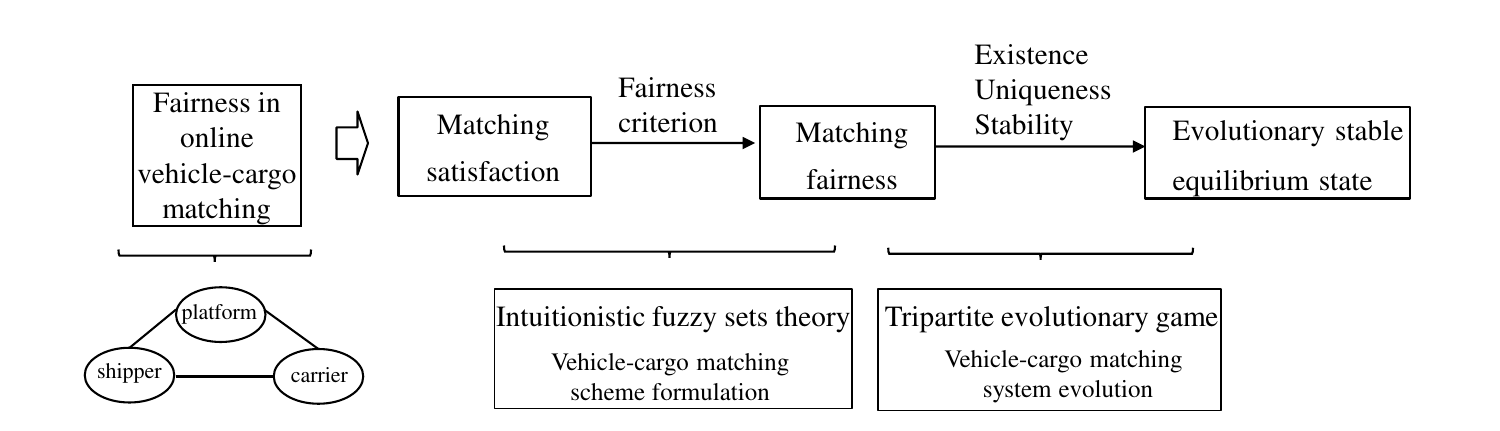} 
\caption{Framework of fairness in online vehicle-cargo matching}
\label{Framework}
\end{figure}

In essence, this paper seeks to address the existing gap in considering fairness during the formulation and implementation of vehicle-cargo matching schemes by developing an analytical model and exploring its practical implications (see Figure \ref{Framework}). On one hand, an optimization model is created to maximize the overall satisfaction of bilateral vehicle-cargo matching entities, factoring in the satisfaction realization degree of unilateral matching subjects. The traditional vehicle-cargo matching model is transformed using intuitionistic fuzzy set theory, and methods for determining satisfactory matching schemes (explicitly trade the fairness and satisfaction objective) are discussed. On the other hand, a tripartite evolutionary game framework is proposed to examine the evolutionary trajectory of the online vehicle-cargo matching system. Stakeholders' strategic decision-making behaviors are analyzed concerning various influencing factors, particularly fairness concerns, and a cost-sharing contract is incorporated into the tripartite evolutionary game model to more realistically expand the research direction. Our primary findings and contributions are as follows.

1.	The satisfactory vehicle-cargo matching scheme, derived using intuitionistic fuzzy set theory, aligns with the decision-maker's understanding and depiction of realistic objectives. The matching outcome is closely related to 1) the value interval of the fairness concern coefficient, and 2) the limit of unilateral matching subjects' unsatisfactory interval restrictions. The proposed matching mechanism's advantages are highlighted when compared to traditional bi-objective optimization approaches, such as max-min, ideal point solutions, and linear weighting.

2.	For the entire online vehicle-cargo matching tripartite evolutionary system, the system will evolve towards the anticipated evolutionary stable equilibrium state when platform subsidy intensity, service level, and fairness concern intensity satisfy specific conditions. The evolution rate and direction depend on the influencing factors' relative strength.

3.	In cases where carriers and shippers have inconsistent strategic selections, the matching subjects who decline the recommended matching scheme should bear all waiting response time costs (WRTC). This approach facilitates the tripartite evolutionary system's achievement of the expected evolutionary stable equilibrium state through a cost-sharing contract mechanism.

Contributions:

•	This paper models the membership, non-membership, and uncertainty information of unilateral matching subjects' satisfaction realization degrees based on intuitionistic fuzzy set theory. This approach allows for flexible degrees of alignment and trade-offs between two optimization objectives in line with decision-maker's logic. Consequently, a satisfactory vehicle-cargo matching scheme that considers fairness concerns is obtained.

•	Unlike existing literature that typically considers only two parties in online matching, this paper proposes a tripartite evolutionary game model that includes shippers and carriers participating in vehicle-cargo matching, and a platform making matching and subsidizing decisions. The existence, uniqueness, and stability of the equilibrium state are discussed in relation to different model parameter choices.

•	This paper is among a few pieces of literature that explore the unequal perceptions of both matching sides on the same outcome in the online vehicle-cargo matching process, addressing the. With the proposed fairness-aware vehicle-cargo matching scheme, the platform achieves positive revenue growth while shippers and carriers receive subsidy income when the system reaches equilibrium. In contrast, they do not receive such income without fairness considerations (see Section \ref{5.0} for detailed results and analysis).

The remainder of this paper is organized as follows: Section \ref{2.0} presents the literature review. Section \ref{3.0} focuses on the vehicle-cargo matching problem formulation, improvement, and interactive solving based on intuitionistic fuzzy set theory. Section \ref{4.0} analyzes stakeholders' strategic selection with fairness concerns. Section \ref{5.0} examines a case study using typical online vehicle-cargo matching scenarios and provides the corresponding simulation analysis. Finally, major conclusions, managerial insights, and future research directions are outlined.

\section{Literature review} \label{2.0}
In this section, we provide a research survey of fairness concerns, vehicle-cargo matching problems and intuitionistic fuzzy set theory in bilateral matching. Our study will fill the niche of fairness in online vehicle-cargo matching. 
\subsection{Fairness concerns in logistics systems and supply chains}

When discussing fairness concerns in logistics supply chains, most existing research focuses on pricing decisions, primarily expanding from three aspects: the reference point of fairness concern in specific problem settings, stakeholders' decision-making behavior environments, and related contract mechanisms.

Since profit distribution among stakeholders in a supply chain introduces fairness concerns in utility realization, stakeholders' payoffs in specific problem settings are often modeled as the reference point \citep{LP2018,SGH2023}. Scholars have also explored the impact of coexisting different fairness concern reference types on stakeholders' optimal decisions, such as distributional fairness concerns and dual/peer-induced fairness concerns \citep{HSW2014,LWSYW2018,DWZN2018}. Fairness concern alters stakeholders' relationships, which is typically incorporated into various game-theoretic models concerning stakeholders' decision-making behavior environments, including Stackelberg games \citep{AB2020,WCW2022}, ultimatum games \citep{HSW2014}, and others \citep{JZZC2023}. Simultaneously, to enhance logistics production operation channel coordination capabilities, different contract mechanisms are introduced into game-theoretic models, ensuring decision-makers' utilities can be maximized when specific parameter conditions are met \citep{ND2017,LWPS2022,DHY2023}. In short, fairness concern is a critical research issue in the behavioral operations management of supply chains.   

Despite the importance of fairness concerns in logistics systems, it has received minimal attention in vehicle-cargo matching. As a major research area for online freight stowage, this paper introduces matching fairness concerns to improve the matching mechanism for such a process.

\subsection{Vehicle-cargo matching problems}

In order to fully utilize online freight stowage information and enhance freight stowage efficiency, scholars have developed models incorporating factors such as matching satisfaction, matching number, matching cost, and matching probability. They have solved these problems using exact algorithms \citep{LZDY2020}, problem-specific heuristic algorithms \citep{YG2013,ZLS2018}, and machine-learning algorithms \citep{DZW2021,TWMLG2022} to obtain vehicle-cargo matching scheme solutions.

As concerns about service increase, the applicability of vehicle-cargo matching mechanisms has gained more attention, focusing on stakeholders' decision-making behavior during the game process. \cite{JHD2017} constructed a bargaining game model for two-sided users based on two-sided market theory and evolutionary game theory. They discussed the control process of evolution from multi-homing to single-homing users aiming to raise the single-homing user ratio of vehicle-cargo matching platforms and enhance matching efficiency and the value of platform enterprise. Based on shippers' and carriers' basic strategic selection of “acceptance” or “non-acceptance” to the vehicle-cargo matching schemes, \cite{WLGGW2020} investigated several factors' influence on the evolutionary stable equilibrium state of vehicle-cargo matching systems, such as online platform's service level and users' waiting cost.  It is worth noting that the online vehicle-cargo matching process exhibits two-sided market characteristics: users' decision behavior is interdependent and mutually influential  \citep{A2006,R2009}.

Little attention, however, has been given to the impact of matching fairness concerns on the vehicle-cargo matching system's evolution process, and the platform's role in this process has been overlooked. This paper explores how the consideration of fairness by the platform when formulating and implementing vehicle-cargo matching schemes impacts stakeholders' subsequent strategic selection. As a result, the platform is incorporated into the tripartite evolutionary game framework in our vehicle-cargo matching mechanism research, distinguishing it from existing literature.

\subsection{Intuitionistic fuzzy set theory in bilateral matching }
In many decision scenarios, it is difficult to describe subjective, vague, and imprecise information. Therefore, fuzzy set theory \citep{Z1965} has been introduced in various bilateral matching decision problem studies. \cite{A1986,A1999} added the concepts of non-membership and uncertainty to the basis of fuzzy set theory, proposing an intuitionistic fuzzy set theory. For a comprehensive review of its evolutionary process, the reader can refer to \citep{YSX2022}. Compared with fuzzy sets, intuitionistic fuzzy sets offer both flexibility and practicability, better describing fuzzy information in detail. As a result, intuitionistic fuzzy sets have been widely applied in bilateral matching decision problems, such as public-private partnership projects \citep{WSZ2016}, personnel-position matching problem \citep{LYL2021}, matching problem of the supply and demand of technology/knowledge \citep{JWZZZ2021}, combined with decision behavioral factors, such as prospect theory \citep{TXGH2018}, regret theory \citep{JWZZZ2021}. Although the application of intuitionistic fuzzy expands the research context of the bilateral matching decision, matching stability \citep{ZKPYG2019} and matching satisfaction 
 \citep{LYL2021} remain the primary modeling footholds, in the absence of matching fairness concerns.

Acknowledging the inherent fuzzy characteristics in the decision-making process, this paper novelly incorporates fairness concerns into the vehicle-cargo bilateral matching model using intuitionistic fuzzy set theory. This is achieved by representing the membership, non-membership, and uncertainty information of unilateral matching subjects' satisfaction realization degrees.

For reference and comparison, we present a summary of related literature in logistics and supply chain management that considers fairness, as shown in Table \ref{relatedreserachtab}:

\begin{table}[!h]
\caption{A summary of related logistic supply chain researches considering fairness concern}	\label{relatedreserachtab} 
\resizebox{\textwidth}{1.45in}{
\begin{tabular}{|c|c|c|c|c|}
\hline
Literature                                                         & \begin{tabular}[c]{@{}c@{}}Problem \\ setting\end{tabular}                      & \begin{tabular}[c]{@{}c@{}}Reference of \\ fairness concerns\end{tabular}       & \multicolumn{1}{l|}{Type of game}                            & \begin{tabular}[c]{@{}c@{}}Contract \\ mechanism\end{tabular}             \\ \hline
\begin{tabular}[c]{@{}c@{}} \cite{HSW2014} \end{tabular}         & \begin{tabular}[c]{@{}c@{}}a dyadic \\ supply chain\end{tabular}                & \begin{tabular}[c]{@{}c@{}}utility \\ realization\end{tabular}                  & \begin{tabular}[c]{@{}c@{}}ultimatum \\ games\end{tabular}   & ——                                                                        \\ \hline
\begin{tabular}[c]{@{}c@{}} \cite{ND2017}\end{tabular}        & \begin{tabular}[c]{@{}c@{}}a dyadic \\ supply chain\end{tabular}                & \begin{tabular}[c]{@{}c@{}}utility \\ realization\end{tabular}                  & \begin{tabular}[c]{@{}c@{}}stackelberg \\ game\end{tabular}  & \begin{tabular}[c]{@{}c@{}}quantity discount;\\  fixed fees\end{tabular}  \\ \hline
\begin{tabular}[c]{@{}c@{}} \cite{LWSYW2018} \end{tabular}        & order allocation                                                                & \begin{tabular}[c]{@{}c@{}}utility \\ realization\end{tabular}                  & \begin{tabular}[c]{@{}c@{}}stackelberg \\ game\end{tabular}  & \begin{tabular}[c]{@{}c@{}}membership fee; \\ profit sharing\end{tabular} \\ \hline
\begin{tabular}[c]{@{}c@{}}\cite{AB2020}\end{tabular} & \begin{tabular}[c]{@{}c@{}}production of green \\ apparel products\end{tabular} & \begin{tabular}[c]{@{}c@{}}utiltiy \\ realization\end{tabular}                  & \begin{tabular}[c]{@{}c@{}}stackelberg \\ game\end{tabular}  & \begin{tabular}[c]{@{}c@{}}cost-sharing;\\ profit-sharing\end{tabular}    \\ \hline
\begin{tabular}[c]{@{}c@{}} \cite{WCW2022} \end{tabular}   & \begin{tabular}[c]{@{}c@{}} closed-loop \\supply chain \end{tabular}         & \begin{tabular}[c]{@{}c@{}}utility \\ realization\end{tabular}                  & \begin{tabular}[c]{@{}c@{}}stackelberg \\ game\end{tabular}  &  ——                                                                        \\ \hline
\begin{tabular}[c]{@{}c@{}} \cite{JZZC2023} \end{tabular}                                                           & \begin{tabular}[c]{@{}c@{}}multi-agent reverse\\supply chain\end{tabular}               & \begin{tabular}[c]{@{}c@{}} utility \\ realization \end{tabular} & \begin{tabular}[c]{@{}c@{}}biform \\ game\end{tabular} & coordination                                                             \\ \hline
This paper                                                         & \begin{tabular}[c]{@{}c@{}}vehicle-cargo \\ matching\end{tabular}               & \begin{tabular}[c]{@{}c@{}}matching \\ satisfaction \\ realization\end{tabular} & \begin{tabular}[c]{@{}c@{}}evolutionary \\ game\end{tabular} & cost-sharing                                                              \\ \hline
\end{tabular}}
\end{table}

\section{Vehicle-cargo matching model formulation, improvement and solving based on intuitionistic fuzzy set theory with fairness concern} \label{3.0}
In this section, based on intuitionistic fuzzy set theory, we will conduct our work of vehicle-cargo matching problem modelling, improvement and solving with fairness concern. 
\subsection{Matching fairness criteria}\label{3.1}
  Existing vehicle-cargo matching reseraches mainly focus on maximizing matching satisfaction aiming at fully utilizing freight resources and improving freight efficiency in the absence of fairness concern. It is worth noting that fairness concern brings fresh service innovation by influencing stakeholders' strategic behavior directly. Nevertheless, there is no straightforward way to measure fairness in various decision scenarios. Below, we show some classic fairness criteria that will be used as a reference for measuring matching fairness in the remainder of the paper. The readers can refer to \citep{KM2015,CH2023} for surveys.
  
\begin{itemize}
\item Rawlsian maximin (Max-min fairness).
Max-min fairness (MMF) is a generalization of the
Rawlsian justice solution in the two-player problem. The 
solution corresponds to maximizing the
minimum utility the players derive simultaneously. In other words,
the players simultaneously derive the largest 
possible equal fraction of their respective maximum achievable utilities.
\item Proportionality (Nash bargaining solution).
Proportional fairness (PF) is the generalization of the Nash
solution for a two-player problem. Under the Nash standard,
a transfer of resources between two players is favorable and
fair when the percentage increase in the utility of one player
is larger than the percentage decrease in the utility of the other
player. Proportional fairness is the generalized Nash solution 
for multiple players.
\item Equitability (Alpha fairness).
Alpha fairness regulates the combination of a continuous parameter 
$\alpha$, where larger values of $\alpha$ signify a greater emphasis 
on fairness. A famous special case is the Nash bargaining solution,
which corresponds to $\alpha=1$. Alpha fairness allots the parties the 
largest possible fraction of their potential utility while observing 
fairness by equalizing that fraction across parties.
\end{itemize}

  Fairness concern focuses on ``equitability'' with regard to utility realization in the logistics supply chain area \citep{ND2017,AB2020,WCW2022}. Combined with existing studies about the fairness criterion which has been widely discussed and applied, we adopt the equitability fairness criterion to normalize the vehicle-cargo matching fairness in this paper.

\begin{definition} Vehicle-cargo matching fairness

 We consider a bilateral online freight stowage market composed of shippers, carriers and the online platform. The platform makes matching decisions based on participants' vehicle-cargo matching invitation information. In this paper, the vehicle-cargo matching fairness concern coefficient $\eta$ tightly relates to the matching satisfaction realization from both the shipper side and carrier side, which equals the shipper side's matching satisfaction to the carrier side's matching satisfaction and vice versa.
\end{definition}  

\subsection{Matching satisfaction evaluation indicator system} \label{3.2} 

Through China Wutong Network (www.chinawutong.com), we collected nearly 1,500 pieces vehicle-cargo matching invitation information from both the shipper and carrier sides. Then, combined with word segmentation, semantic merge, clustering, and other methods, the vehicle-cargo matching indicators are extracted conforming to the actual situation \citep{L2003,MU2014}. The indicator of shipper side is formulated as $A=\left \{ A^{1} , A^{2} \cdots  A^{9}  \right \}$ , $A^{f}$ means the $f$th indicator of shipper. The indicator of carrier side is formulated as $B=\left \{ B^{1} , B^{2} \cdots  B^{9}  \right \}$, with $B^{f}$ being the $f$th indicator of carrier. It is important to note that in this paper, the matching subjects' operational qualifications and reputations are condensed into a reliability indicator, as illustrated in Table \ref{indicatortab}. The reliability indicator contributes to satisfaction aggregation, while the remaining non-reliability indicators are divided into two categories: attribute indicators and expectation indicators. Notably, attribute indicators do not participate in the satisfaction aggregation of matching subjects, whereas expectation indicators do.
\begin{table}[!htbp]
\caption{The indicator of shipper and carrier}	\label{indicatortab} 
\resizebox{\textwidth}{.9in}{
\begin{tabular}{|c|c|c|c|c|c|}
\hline
\multirow{10}{*}{Shipper} & Indicator          & Type        & \multirow{10}{*}{Carrier} & Indicator          & Type        \\ \cline{2-3} \cline{5-6} 
                          & The delivery date $A^{1}$ & Expectation &                    & The delivery date$B^{1}$  & Attribute   \\ \cline{2-3} \cline{5-6} 
                          & The delivery price $A^{2}$ & Expectation &                           & The delivery price $B^{2}$ & Expectation \\ \cline{2-3} \cline{5-6} 
                          & The vehicle type $A^{3}$  & Expectation &                           & The vehicle type $B^{3}$  & Attribute   \\ \cline{2-3} \cline{5-6} 
                          & Cargo weight  $A^{4}$  & Attribute   &                           & Vehicle deadweight $B^{4}$ & Attribute   \\ \cline{2-3} \cline{5-6} 
                          & Cargo length  $A^{5}$      & Attribute   &                           & Vehicle length  $B^{5}$   & Attribute   \\ \cline{2-3} \cline{5-6} 
                          & Cargo size    $A^{6}$        & Attribute   &                           & Vehicle volume  $B^{6}$   & Attribute   \\ \cline{2-3} \cline{5-6} 
                          & Place of departure $A^{7}$  & Attribute   &                           & Place of departure $B^{7}$  & Attribute   \\ \cline{2-3} \cline{5-6} 
                          & Place of delivery  $A^{8}$ & Attribute   &                           & Place of delivery $B^{8}$ & Expectation \\ \cline{2-3} \cline{5-6} 
                          & Reliability  $A^{9}$      & —           &                    & Reliability  $B^{9}$     & —           \\ \hline
\end{tabular}}
\end{table}


We now shift our focus to the quantification of indicator satisfaction. As our research is centered on matching fairness concerns, this paper mainly examines the freight market situation where demand and supply are relatively balanced, excluding the influence of unbalanced market demand and supply on the quantification of indicator satisfaction. For expectation indicators, formulas are provided to quantify their satisfaction levels. Additionally, expectation indicators are classified into two categories (i.e., tolerable or intolerable) concerning the matching subjects' matching intentions.

	\setlength\LTleft{0pt}
	\setlength\LTright{0pt}
	\begin{longtable}{@{\extracolsep{\fill}}cl}
		\caption{Mathematical symbols}	\label{tabmath} \\
			\hline
			\multicolumn{1}{c}{\begin{tabular}[c]{@{}c@{}}Constants \\or\\ parameter\end{tabular}} & \multicolumn{1}{c}{\begin{tabular}[c]{@{}c@{}}Explanation\end{tabular}}  \\
			\hline
			$ i $    & Set of shippers,  $i=1,2 \dots m  $ 
			\\
			$j$      & Set of carriers,  $j=1,2\dots n $
			\\
			$f$      & Set of indicators $f=1,2\dots 9 $
			\\
			$\varepsilon_{ij}^{\text{f}}$ &  The indicator satisfaction of $R^i$ towards $S^j$ under $A^f$ 
			\\
			$\alpha_{i}^{f}$ & The indicator value of $R^i$ under $A^f$ 
			\\
			$\alpha_{\max}^{f}$  & The maximum indicator value of $R^i$ under $A^f$, $ \alpha_{\max}^f=\max \left \{ \alpha_{1}^{f},\alpha_{1}^{f},\dots,\alpha_{m}^{f}   \right \} $
			\\
            $\alpha_{\min}^{f}$  & The minimum indicator value of $R^i$ under $A^f$, $ \alpha_{\min}^f=\min \left \{ \alpha_{1}^{f},\alpha_{1}^{f},\dots,\alpha_{m}^{f}   \right \} $
			\\
            $\theta_{ij}^{f}$  & The indicator satisfaction of $S^j$ towards $R^i$ under $B^f$
			\\
            $b_{j}^f$ & The indicator value of $S^j$ under $B^f$
            \\
            $b_{\max}^f$ & The maximum indicator value of $S^j$ under $B^f$ , $ b_{\max}^f=\max \left \{ b_{1}^{f},b_{1}^{f},\dots,b_{n}^{f}   \right \} $
			\\
            $b_{\min}^f$ & The minimum indicator value of $S^j$ under $B^f$ , $ b_{\min}^f=\min \left \{ b_{1}^{f},b_{1}^{f},\dots,b_{n}^{f}   \right \} $
			\\
			$[a_{i}^{fL},a_{i}^{fU}]$     & Shipper $i$'s measurement interval of the delivery date 
	        \\
			 $M$     & A large enough positive number
			\\
			$\theta$ & A parameter quantifying the delivery date indicator satisfaction,$0<\theta\le 1$ 
			\\
			$\omega$       & A parameter quantifying the delivery price indicator satisfaction,$0<\omega\le 1$  
			\\
			$\varphi _{i}$       & A parameter quantifying the vehicle type indicator satisfaction,$0<\varphi_{i}\le 1$  
			\\
			$\tau _{j}$  & A parameter quantifying the place of delivery indicator satisfaction,$0<\tau_{j}\le 1$ 
			\\
			{\begin{tabular}[l]{@{}l@{}} ($b_{j1}^f,b_{j2}^f$\\ ,\dots,$b_{jq}^f$) \end{tabular}} & Carrier $j$'s preference-sequence set under $B^f$
			\\
			$r_{ij}$ & The rank of shipper $i$'s indicator value in carrier $j$'s preference-sequence set \\
            \hline
	\end{longtable} 

For shipper, the delivery date $A^1$ is an interval-type expectation indicator (see Eq. (\ref{shipper-1}) and Figure 
 \ref{shipperdelivery}). The nearer the carrier's delivery date close to the interval left end, the more satisfied the shipper is. Its indicator satisfaction can be formulated as:

\begin{equation}\label{shipper-1}
\varepsilon _{ij}^f=
\begin{cases}
\theta_{i}+ \left ( 1-\theta_{i} \right ) \frac{a_{i}^{fU}-b_{j}^f}{a_{i}^{fU}-a_{i}^{fL}} \qquad b_{j}^f\in \left [ a_{i}^{fL},a_{i}^{fU} \right ] 
\\
-\theta_{i} \frac{a_{i}^{fL}-b_{j}^f}{a_{i}^{fU}-b_{j}^f} \qquad\qquad\qquad$\;$ b_{j}^f < a_{i}^{fL}$\;$ \text{and} $\;$ A^f $\;$ \text{is} $\;$ \text{tolerable}
\\
-\theta_{i} \frac{b_{j}^{f}-a_{i}^{fU}}{b_{j}^{f}-a_{i}^{fL}} \qquad\qquad\qquad$\;$ b_{j}^f > a_{i}^{fU}$\;$ \text{and} $\;$ A^f $\;$ \text{is} $\;$ \text{tolerable}
\\
-M \qquad\qquad\qquad\qquad$\;$$\;$$\;$ b_{j}^f < a_{i}^{fL}$\;$\text{or}$\;$ b_{j}^f < a_{i}^{fU} $\;$\text{and} $\;$A^f $\;$\text{is} $\;$\text{tolerable}
\end{cases}
\end{equation}

\begin{figure}[!htb]
\centering
\includegraphics[width=.5\textwidth]{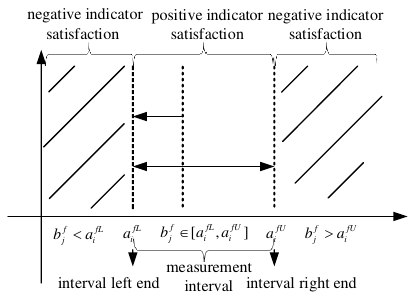}
\caption{Shipper's indicator satisfaction quantification of the delivery date}
\label{shipperdelivery}
\end{figure}

For shipper, the delivery price $A^2$ is a cost-based expectation indicator (see Eq. (\ref{shipper-2}) and Figure \ref{shipperdeliverypricevehicletype}a). The lower the carrier's delivery price, the more satisfied the shipper. Its indicator satisfaction can be formulated as:
\begin{equation}\label{shipper-2}
\varepsilon _{ij}^f=
\begin{cases}
\omega_{i}+ \left ( 1-\omega_{i} \right ) \frac{a_{i}^{f}-b_{j}^f}{a_{i}^{f}-b_{\min}^{f}} \qquad a_{i}^f\ge b_{j}^f\ge b_{\min}^f
\\
-\omega_{i} \frac{b_{j}^{f}-a_{i}^f}{b_{\max}^{f}-a_{i}^f} \qquad\qquad\qquad b_{j}^f > a_{i}^{f} $\;$ \text{and} $\;$ A^f$\;$\text{is} $\;$ \text{tolerable}
\\
-M \qquad\qquad\qquad\qquad\quad$\;$ b_{j}^f > a_{i}^{f} $\;$ \text{and} $\;$ A^f $\;$ \text{is} $\;$ \text{intolerable}
\end{cases}
\end{equation}

For shipper, the vehicle type $A^3$ is a fixed expectation indicator (see Eq. (\ref{shipper-3}) and see Figure \ref{shipperdeliverypricevehicletype}b). The carrier provides the shipper with various vehicle types, and the shipper requires suitable vehicle type for the cargos to be shipped. Its indicator satisfaction can be formulated as:

\begin{equation}\label{shipper-3}
\varepsilon _{ij}^f=
\begin{cases}
1, \qquad\quad$\;$ b_{j}^f = a_{i}^f
\\
-\varphi _{i} , \qquad b_{j}^f \ne a_{i}^f $\;$ \text{and} $\;$ A^{f} $\;$\text{is} $\;$\text{tolerable}
\\
-M, \qquad   b_{j}^f \ne $\;$ a_{i}^f $\;$ \text{and} $\;$ A^{f} $\;$ \text{is} $\;$ \text{intolerable}
\end{cases}
\end{equation}

\begin{figure}[!htb]
\centering
\includegraphics[width=.8\textwidth]{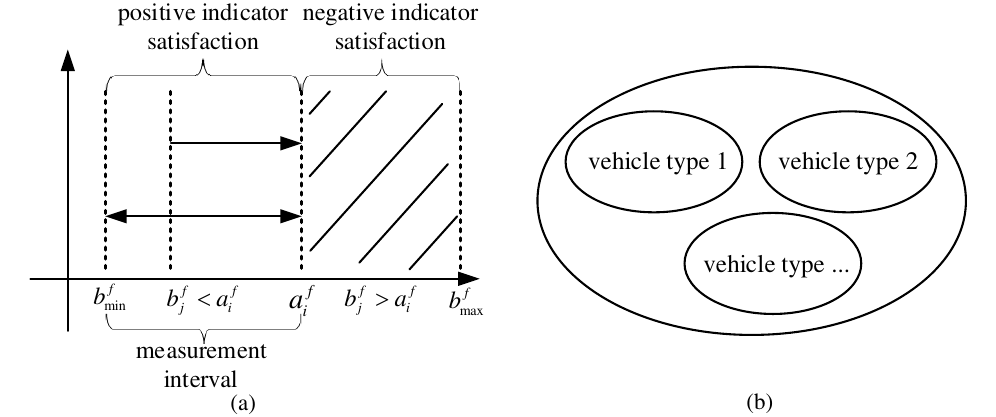} 
\caption{Shipper's indicator satisfaction quantification of the vehicle type}
\label{shipperdeliverypricevehicletype}
\end{figure}

For carrier, the delivery price $B^2$ is a benefit-based expectation indicator (see Eq. (\ref{carrier-1}) and see Figure \ref{carrierdeliverypricevehicletype}a). The higher the shipper's delivery price, the more satisfied the carrier. Its indicator satisfaction can be formulated as:

\begin{equation}\label{carrier-1}
\theta _{ij}^f=
\begin{cases}
\omega_{j}+ \left ( 1-\omega_{j} \right ) \frac{a_{i}^{f}-b_{j}^f}{a_{\max}^{f}-b_{j}^{f}} \qquad b_{j}^f\le a_{i}^f\le a_{\max}^f 
\\
-\omega_{j} \frac{b_{j}^{f}-a_{i}^f}{b_{j}^{f}-a_{\min}^f} \qquad\qquad\qquad$\;$ a_{i}^f < b_{j}^{f} ~ \text{and} ~ B^f ~\text{is tolerable}
\\
-M \qquad\qquad\qquad\qquad\quad$\;$$\;$ a_{i}^f < b_{j}^{f} ~\text{and} ~B^f ~\text{is intolerable}
\end{cases}
\end{equation}

For carrier, the place of delivery $B^8$ is a preference-sequence expectation indicator (see Eq. (\ref{carrier-2}) and see Figure \ref{carrierdeliverypricevehicletype}b). The carrier requires that the shipper's indicator value stays in its own preference-sequence set. Its indicator satisfaction can be formulated as:
\begin{equation}\label{carrier-2}
\theta _{ij}^f=
\begin{cases}
r_{ij}^{-\tau_{j}}, \qquad a_{i}^f $\;$ \text{in} $\;$ b_{j}^f
\\
-M, \qquad a_{i}^f \notin b_{j}^f $\;$ \text{and} $\;$ B^f $\;$ \text{is} $\;$ \text{tolerable} 
\end{cases}
\end{equation}

\begin{figure}[!htb]
\centering
\includegraphics[width=.8\textwidth]{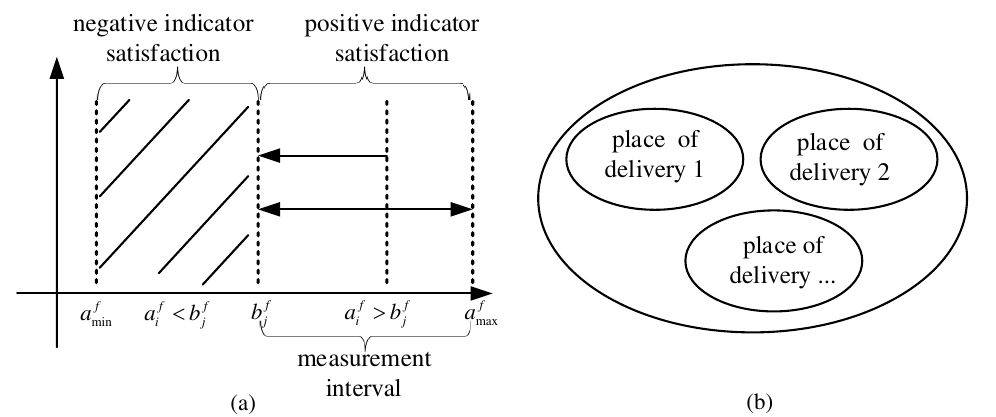} 
\caption{Carrier's indicator satisfaction quantification of the place of delivery}
\label{carrierdeliverypricevehicletype}
\end{figure}

The shipper's reliability indicator evaluation criteria include vehicle completeness, transportation services, and service level. The carrier's reliability indicator evaluation criteria include inventory capacity and service level. $\widehat{A} =[\widehat{a}_{ij}]_{m\times n}$ and $\widehat{B} =[\widehat{b}_{ji}]_{n\times m}$ are the reliability decision matrices from the perspective of the shipper or the carrier, respectively. The elements $\widehat{a}_{ij}$ in $ \widehat{A}$  and the elements  $\widehat{b}_{ji}$ in $\widehat{B}$ are expressed as intuitionistic fuzzy numbers.Through the scoring function, the matrices $\widehat{A} =[\widehat{a}_{ij}]_{m\times n}$ and $\widehat{B} =[\widehat{b}_{ji}]_{n\times m}$ are transformed into the scoring matrices $A=[{a}_{ij}]_{m\times n}$ and $B =[{b}_{ji}]_{n\times m}$, which are the intuitive reflection of the reliability indicator's satisfaction between the two matching sides. Traditional scoring functions typically only incorporate membership and non-membership degree information. However, in order to account for hesitation degree and conformity psychology, this paper utilizes intuitionistic fuzzy cross-entropy to determine the corresponding distribution (see details in Appendix \ref{Appendix A.}). Since the reliability indicator evaluation criteria weight is completely unknown, this paper establishes the following optimization model (see details in Appendix \ref{Appendix B.}), aiming to maximize the comprehensive reliability indicator satisfaction.

\subsection{Formulation}\label{3.3}
Although AHP is a widely used method for weighting indicators, it requires decision-makers to repeatedly judge the relative importance of the indicators, which can lead to consistency problems with the reciprocal binary judgment matrix. An alternative approach is the Relative Superiority Degree of Adjacent Targets (RSDAT) method, which overcomes the inherent limitations of AHP. This method is also straightforward and convenient to use, aligning with the logic of decision-makers (see details in Appendix \ref{Appendix C.}). 

The weight vector of the evaluation indicator $w=(w_{1},w_{2},\cdots,w_{k})^T$ is obtained. So far, the indicator satisfaction of the expectation indicator and the reliability indicator is aggregated, and the comprehensive satisfaction of the shipper and the carrier with regard to the potential matching subjects is obtained:

\begin{equation}\label{shipperdeliverydate}
\begin{cases}
\alpha_{ij}={\textstyle \sum_{f=1}^{9}} w_{i}^{f}\epsilon_{ij}^{f}
\\
\beta_{ij}={\textstyle \sum_{f=1}^{9}} w_{j}^{f}\theta_{ij}^{f}
\end{cases}
\end{equation}                                                               \noindent where $w_{i}^f$ shows the weight of $R^i$ on the indicator $A^f$,  ${\textstyle \sum_{f=1}^{9}} w_{i}^{f}=1,0 \le w_{i}^{f} \le 1 $; $w_{j}^f$ shows the weight of $S^j$ on the indicator $B^f$, ${\textstyle \sum_{f=1}^{9}} w_{j}^{f}=1,0 \le w_{j}^{f} \le 1 $. In particular, the attribute indicator does not participate in the two matching sides' satisfaction aggregation, whose weights are all equal to 0. Based on the above derivation, the objective function is given as follows to maximize the two matching sides' satisfaction:
\begin{equation}
\label{3.11}
\max f_{1}= {\textstyle \sum_{i=1}^{m}}  {\textstyle \sum_{j=1}^{n}} \alpha _{ij}x_{ij}
\end{equation}
\begin{equation}
\max f_{2}= {\textstyle \sum_{i=1}^{m}}  {\textstyle \sum_{j=1}^{n}} \beta _{ij}x_{ij}
\end{equation}

Each shipper is guaranteed a match with a single carrier, while each carrier is guaranteed at most one match with a shipper, hence:
\begin{equation}
{\textstyle \sum_{j=1}^{n}}x_{ij} \le 1 
\end{equation}
\begin{equation}
{\textstyle \sum_{i=1}^{m}}x_{ij} \le 1 
\end{equation}

At the same time, the cargo weight $a_{i}^4$, length $a_{i}^5$, and size $a_{i}^6$ must not exceed vehicle deadweight $b_{j}^4$, length $b_{j}^5$, and volume $b_{j}^6$ respectively. The departure point between the cargo and vehicle shall be consistent, hence:
\begin{equation}
{\textstyle \sum_{i=1}^{m}}a_{i}^4 x_{ij}\le b_{j}^4
\end{equation}
\begin{equation}
{\textstyle \sum_{i=1}^{m}}a_{i}^5 x_{ij}\le b_{j}^5
\end{equation}
\begin{equation}
{\textstyle \sum_{i=1}^{m}}a_{i}^6 x_{ij}\le b_{j}^6
\end{equation}
\begin{equation}
\left( a_{i}^7-b_{j}^7 \right) x_{ij} = 0
\end{equation}

In the model, $x_{ij}=1$ represents that the shipper $R^i$ matches with carrier $S^j$  as a pairing, on the contrary $x_{ij}=0$ , all the variables involved in this model are 0-1 binary variable, hence:
\begin{equation}
\label{3.19}
x_{ij} = 0\; \text{or} \; 1,\quad i=1,2,\dots,m,j=1,2,\dots,n
\end{equation}
 
\subsection{Model improvement and interactive solving}\label{3.4} 
By setting the matching satisfaction threshold to a non-infinitesimal value, a conditional screening process can be conducted on prospective matching subjects based on their attribute indicators (see Eqs. (\ref{3.11} - \ref{3.19})). This process leads to the derivation of a simplified optimization model.
\begin{equation}\label{28}
Lp1
\begin{cases}
\max f_{1}= {\textstyle \sum_{i=1}^{m}}  {\textstyle \sum_{j=1}^{n}} \alpha _{ij}x_{ij}\\
\max f_{2}= {\textstyle \sum_{i=1}^{m}}  {\textstyle \sum_{j=1}^{n}} \beta _{ij}x_{ij}\\

\hbox{s.t.}~\quad$\;${\textstyle \sum_{j=1}^{n}}x_{ij}\le 1 \\
\quad\quad\quad  {\textstyle \sum_{i=1}^{m}}x_{ij} \le 1 \\
\quad\quad\quad  x_{ij}=0 $\;$ \text{or} $\;$ 1\\
\quad\quad\quad  i=1,2,\dots,m,j=1,2,\dots,n
\end{cases}
\end{equation}
                                                    
The simplified optimization model can be formulated as a dual-objective 0-1 linear programming problem, which is commonly transformed into a single-objective problem for technical reasons. However, this paper concentrates on the combination of objective coefficients to account for the complementarity and consistency of the two matching objectives. In practical applications, it can be challenging to precisely and comprehensively describe the decision-making information of both matching sides, and the attainment of unilateral matching objectives can be even more ambiguous. To address this issue, the concept of membership degree and non-membership degree in intuitionistic fuzzy set theory is introduced to quantify the fuzzy target. Now the membership functions 
$ u_{1}\left( f_{1} \left( x  \right) \right ) $ and $ u_{2}\left( f_{2} \left( x  \right) \right ) $  of the objective functions  $f_{1} \left( x  \right)$   and $f_{2} \left( x  \right)$  are defined as:
\begin{equation}\label{29}
u_{1}\left( f_{1} \left( x  \right) \right )=
\begin{cases}
0 \qquad\qquad\quad  f_{1} \left( x  \right) \le f_{1}^L
\\
\frac{f_{1}\left( x \right)-f_{1}^L}{f_{1}^U-f_{1}^L}   \qquad  f_{1}^L \le f_{1} \left( x  \right) \le f_{1}^U
\\
1 \qquad\qquad\quad  f_{1} \left( x  \right) \ge f_{1}^U
\end{cases}
\end{equation}

\begin{equation}\label{30}
u_{2}\left( f_{2} \left( x  \right) \right )=
\begin{cases}
0 \qquad\qquad\quad  f_{2} \left( x  \right) \le f_{2}^L
\\
\frac{f_{2}\left( x \right)-f_{2}^L}{f_{2}^U-f_{2}^L}   \qquad  f_{2}^L \le f_{2} \left( x  \right) \le f_{2}^U
\\
1 \qquad\qquad\quad  f_{2} \left( x  \right) \ge f_{2}^U
\end{cases}
\end{equation}                                                                               
\noindent where $ f_{1}^{U}=\max_{x\in\Omega }f_{1}\left(x\right) $ and $ f_{2}^{U}=\max_{x\in\Omega }f_{2}\left(x\right)$ , representing the highest satisfaction level of the unilateral matching subjects. They are unilateral matching subjects' expected level excluding other influencing factor. While, $ f_{1}^{L}=\min_{x\in\Omega } f_{1}\left(x\right) $  and  $ f_{2}^{L}=\min_{x\in\Omega } f_{2}\left(x\right) $  represent the lowest satisfaction level of the unilateral matching subjects. The membership functions $u_{1}\left( f_{1}\left(x \right) \right) $  and $u_{2}\left( f_{2}\left(x \right) \right) $  reflect the degree which the unilateral matching subjects' satisfaction closes to the expected level, the non-membership functions;  $v_{1}\left( f_{1}\left(x \right) \right) $  and $v_{2}\left( f_{2}\left(x \right) \right) $    reflect the degree which the unilateral matching subjects' satisfaction deviates from the expected level:

\begin{equation}\label{31}
v_{1}\left( f_{1} \left( x  \right) \right )=
\begin{cases}
1 \qquad\qquad\quad  f_{1} \left( x  \right) \le f_{1}^L
\\
\frac{f_{1}^{UL}-f_{1}\left( x \right) }{f_{1}^{UL}-f_{1}^L}   \quad$\;$  f_{1}^L \le f_{1} \left( x  \right) \le f_{1}^{UL}
\\
0 \qquad\qquad\quad  f_{1} \left( x  \right) \ge f_{1}^{UL}
\end{cases}
\end{equation}

\begin{equation}\label{32}
v_{2}\left( f_{2} \left( x  \right) \right )=
\begin{cases}
1 \qquad\qquad\quad  f_{2} \left( x  \right) \le f_{2}^L
\\
\frac{f_{2}^{UL}-f_{2}\left( x \right) }{f_{2}^{UL}-f_{2}^L}   \quad$\;$  f_{2}^L \le f_{2} \left( x  \right) \le f_{2}^{UL}
\\
0 \qquad\qquad\quad f_{2} \left( x  \right) \ge f_{2}^{UL}
\end{cases}
\end{equation}  
          
\noindent where $ f_{1}^{UL}=f_{1}^{U}-\gamma \left( f_{1}^U-f_{1}^L \right) $ , $0<\gamma<1$, parameter $\gamma$ is the unilateral matching subjects' dissatisfaction interval limitation depending on the actual situation, which is taken as 0.2 in this paper. Combining the matching objective's membership degree function with the non-membership degree function, the unilateral matching objective attainment can be measured as a whole (see Figure \ref{Matching objective function}). Hence, the satisfaction functions of $f_{1}\left( x \right) $  and $f_{2}\left( x \right) $ are defined as:

\begin{equation}\label{33}
s_{h} \left ( f_{h} \left ( x \right )  \right ) =u_{h} \left ( f_{h} \left ( x \right )  \right )-v_{h} \left ( f_{h} \left ( x \right )  \right ), \quad h=1,2
\end{equation}

\begin{figure}[!h]
\centering
\includegraphics[width=.6\textwidth]{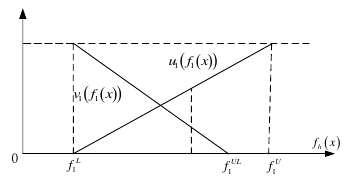}
\caption{Matching objective's membership/non-membership degree (take $f_{1}$ as an example)}
\label{Matching objective function}
\end{figure} 

$s_{1}\left( f_{1}\left(x \right) \right) $ and $s_{2}\left( f_{2}\left(x \right) \right) $ represent the overall satisfaction of the unilateral matching side. An optimization model with the goal of maximizing $s_{1}\left( f_{1}\left(x \right) \right) $ and $s_{2}\left( f_{2}\left(x \right) \right) $  should be:

\begin{equation}\label{34}
Lp2
\begin{cases}
\max \quad s_{1} \left ( f_{1} \left ( x \right )  \right )+s_{2} \left ( f_{2} \left ( x \right )  \right )
\\
\hbox{s.t.}~\quad$\;$ x\in\Omega  \left( a \right) \\ 
\quad\quad\quad  u_{h} \left ( f_{h} \left ( x \right )  \right ) \ge v_{h} \left ( f_{h} \left ( x \right )  \right ) \ge 0, \quad h=1,2  \left( b \right)\\
\quad\quad\quad  u_{h} \left ( f_{h} \left ( x \right )  \right ) + v_{h} \left ( f_{h} \left ( x \right )  \right ) \le 1, \quad h=1,2   \left( c \right)\\
\end{cases}
\end{equation}                                        
                                            
$\Omega $ is the feasible region of $LP1$. For $LP2$, it defines a set of feasible points, denoted as $y$. It also yields an optimal point $y^*\in y$. However, the existence of piecewise functions makes it hard to solve $LP2$ directly. In order to reduce its solving difficulty, we conduct classification discussion, $f_{1}\left( x \right)$ and   $f_{2}\left( x \right)$ are treated as independent variable of $LP2$, then, the value condition of $f_{1}\left( x \right)$ and $f_{2}\left( x \right)$ leads four cases as follows:

Case 1:  $f_{1} {\left( x \right) } \le f_{1}^{UL}$,$f_{2}  {\left( x \right) } \le f_{2}^{UL}$

\begin{equation}\label{shipperdeliverydate}
Lp2-1
\begin{cases}
\max \quad s_{1} \left ( f_{1} \left ( x \right )  \right )+s_{2} \left ( f_{2} \left ( x \right )  \right )
\\
\hbox{s.t.}~\quad$\;$ x\in\Omega  \left( a \right) \\ 
\quad\quad\quad  u_{h} \left ( f_{h} \left ( x \right )  \right ) \ge v_{h} \left ( f_{h} \left ( x \right )  \right ) \ge 0, \quad\quad$\;$$\;$  h=1,2  \left( b \right)\\
\quad\quad\quad  u_{h} \left ( f_{h} \left ( x \right )  \right ) + v_{h} \left ( f_{h} \left ( x \right )  \right ) -1\le 0, \quad h=1,2   \left( c \right)\\
\end{cases}
\end{equation}

Case 2:  $f_{1} {\left( x \right) } \ge f_{1}^{UL}$,$f_{2} {\left( x \right) } \ge f_{2}^{UL}$ 
\begin{equation}\label{shipperdeliverydate}
Lp2-2
\begin{cases}
\max \quad u_{1} \left ( f_{1} \left ( x \right )  \right )+u_{2} \left ( f_{2} \left ( x \right )  \right )
\\
\hbox{s.t.}~\quad$\;$ x\in\Omega  \quad \left( a \right) \\ 
\end{cases}
\end{equation}

Case 3: $f_{1} {\left( x \right) } \le f_{1}^{UL}$, $f_{2} {\left( x \right) } \ge f_{2}^{UL}$ 

\begin{equation}\label{shipperdeliverydate}
Lp2-3
\begin{cases}
\max \quad s_{1} \left ( f_{1} \left ( x \right )  \right )+u_{2} \left ( f_{2} \left ( x \right )  \right )
\\
\hbox{s.t.}~\quad$\;$ x\in\Omega  \left( a \right) \\ 
\quad\quad\quad  u_{1} \left ( f_{1} \left ( x \right )  \right ) \ge v_{1} \left ( f_{1} \left ( x \right )  \right ) \ge 0  \quad\quad$\;$$\;$$\;$ \left( b \right)\\
\quad\quad\quad  u_{1} \left ( f_{1} \left ( x \right )  \right ) + v_{1} \left ( f_{1} \left ( x \right )  \right ) -1\le 0  \quad  \left( c \right)\\
\end{cases}
\end{equation}
  
Case 4: $f_{1} {\left( x \right) } \ge f_{1}^{UL}$, $f_{2} {\left( x \right) } \le f_{2}^{UL}$ 
\begin{equation}\label{shipperdeliverydate}
Lp2-4
\begin{cases}
\max \quad u_{1} \left ( f_{1} \left ( x \right )  \right )+s_{2} \left ( f_{2} \left ( x \right )  \right )
\\
\hbox{s.t.}~\quad$\;$ x\in\Omega  \left( a \right) \\ 
\quad\quad\quad  u_{2} \left ( f_{2} \left ( x \right )  \right ) \ge v_{2} \left ( f_{1} \left ( x \right )  \right ) \ge 0 \quad\quad$\;$$\;$$\;$ \left( b \right)\\
\quad\quad\quad  u_{2} \left ( f_{2} \left ( x \right )  \right ) + v_{2} \left ( f_{2} \left ( x \right )  \right ) -1\le 0  \quad \left( c \right)\\
\end{cases}
\end{equation}

\begin{proposition}
The optimal solution of $LP2$ appears in case 2.
\end{proposition}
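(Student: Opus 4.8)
The plan is to read off the shape of the composite satisfaction functions $s_h$ and to exploit the fact that Case~2 is precisely the region on which the non-membership parts $v_h$ vanish. First I would work out the profile of $s_h$ on $[f_h^L,f_h^U]$ (which contains $f_h(x)$ for every $x\in\Omega$): combining (\ref{29})--(\ref{33}), $s_h$ is continuous and nondecreasing, rising from $s_h(f_h^L)=-1$ with slope $\tfrac{1}{f_h^U-f_h^L}+\tfrac{1}{f_h^{UL}-f_h^L}$ on $[f_h^L,f_h^{UL}]$, reaching $s_h(f_h^{UL})=u_h(f_h^{UL})=1-\gamma$, and then rising with the gentler slope $\tfrac{1}{f_h^U-f_h^L}$ on $[f_h^{UL},f_h^U]$ up to $s_h(f_h^U)=1$. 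Two consequences I would record: $s_h\le u_h$ throughout, with equality iff $f_h\ge f_h^{UL}$; and $u_h+v_h$ decreases from $1$ to $1-\gamma$ on $[f_h^L,f_h^{UL}]$ and equals $u_h\le 1$ beyond, so constraint (c) of $LP2$ is automatic for every $x\in\Omega$, while (b) reduces to $s_h(f_h(x))\ge 0$.

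Next I would observe what Case~2 actually is. On it $v_1=v_2=0$, so (b) and (c) hold automatically and the objective of $LP2$ collapses to $u_1(f_1(x))+u_2(f_2(x))$ --- this is exactly the subproblem $Lp2\text{-}2$. Because $u_h$ is nondecreasing, every point of Case~2 satisfies $u_h(f_h(x))\ge u_h(f_h^{UL})=1-\gamma$, so the optimal value of $Lp2\text{-}2$ is at least $2(1-\gamma)$; here one also records that, in the balanced demand--supply setting of Section~\ref{3.2}, Case~2 is nonempty so that $Lp2\text{-}2$ is well posed.

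The heart of the argument is the domination step: no feasible point of Cases~1, 3 or 4 may beat $Lp2\text{-}2$. For Case~1 this is immediate, since there $f_h(x)\le f_h^{UL}$ for both $h$, whence $s_h(f_h(x))\le s_h(f_h^{UL})=1-\gamma$ by monotonicity and the objective is at most $2(1-\gamma)$. For Cases~3 and 4 only one coordinate is capped by $f_h^{UL}$, so this crude bound yields only $2-\gamma$ and is insufficient; here I would argue instead via the pointwise domination $s_1+s_2\le s_1+u_2\le u_1+u_2$ (and symmetrically for Case~4), noting that $x\mapsto u_1(f_1(x))+u_2(f_2(x))$ is a linear functional on $\Omega$ that dominates the Case-3/Case-4 objectives, and showing that its maximum over $\Omega$ is already attained at a point with $f_1\ge f_1^{UL}$ and $f_2\ge f_2^{UL}$ --- equivalently, that an optimizer of $Lp2\text{-}3$ with $f_1(y^*)<f_1^{UL}$ can be improved by raising $f_1$ into the Case-2 range, where $s_1$ is still increasing. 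Chaining the inequalities then gives $\mathrm{opt}(LP2)\le\max_{x\in\Omega}\bigl(u_1(f_1(x))+u_2(f_2(x))\bigr)=\mathrm{opt}(Lp2\text{-}2)\le\mathrm{opt}(LP2)$, so equality holds and the optimum is realized in Case~2.

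I expect the main obstacle to be exactly this last point for Cases~3 and 4: the value bounds alone do not separate them from Case~2, so one needs either an exchange / monotone-pushing argument that uses the assignment-type structure of the feasible polytope $\Omega$ of $LP1$ (so that $f_1$ can be raised while $f_2$ stays controlled), or the explicit standing assumption that Case~2 is nonempty and carries the maximum of $u_1+u_2$. The shape analysis of $s_h$, the reduction of the constraints on Case~2, and the Case-1 comparison are all routine.
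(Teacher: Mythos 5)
Your shape analysis of $s_h$, the reduction of the constraints (noting that (c) is automatic and (b) collapses to $u_h\ge v_h$, i.e.\ $s_h\ge 0$), and the elimination of case 1 are all correct, and they amount to a more careful rendering of the paper's own argument: the paper partitions the horizontal axis of Figure \ref{partitioning} at the crossing point $u_1=v_1$ and at $f_1^{UL}$, uses constraint (b) to confine feasible values to parts I and II, and then invokes exactly your per-coordinate bound ($s_h\le 1-\gamma$ on part I, $s_h\ge 1-\gamma$ on part II) in the guise of ``the worst solution of part II is better than the optimal solution of part I.'' Up to that point you and the paper are on the same route.

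The genuine gap is the step you yourself flag and leave open: the domination of cases 3 and 4. Your chain $\mathrm{opt}(LP2)\le\max_{x\in\Omega}\bigl(u_1(f_1(x))+u_2(f_2(x))\bigr)\le\mathrm{opt}(LP2)$ needs the maximizer of $u_1+u_2$ over $\Omega$ to lie in case 2, and neither of your proposed repairs is carried out. Worse, the ``exchange / monotone-pushing'' repair cannot succeed from the structure at hand alone: the image of $\Omega$ under $(f_1,f_2)$ is just a finite set of points, and nothing prevents an instance in which the only case-2 point sits near $(f_1^{UL},f_2^{UL})$ (LP2 value about $2-2\gamma$) while another matching attains $(f_1^{UL}-\epsilon,\,f_2^{U})$, a case-3 point whose LP2 value is about $2-\gamma$; there raising $f_1$ into the case-2 range necessarily drags $f_2$ (and the objective) down, the optimum of both $LP2$ and $u_1+u_2$ lies strictly in case 3, and the proposition fails as stated. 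So the conclusion really requires the additional hypothesis you name (case 2 nonempty and carrying the maximum of $u_1+u_2$), assumed rather than derived. For what it is worth, the paper's own one-line proof is silent on precisely this point --- its part-I/part-II comparison is coordinate-wise and only separates case 1 from case 2 --- so you have located the crux correctly, but your proposal does not close it and, as written, is not a complete proof.
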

\begin{proof}
    Inspired by the prerequisite of case 1 $\sim$ case 4, we take $\left \{ f_{1}(x) \right|f_{1}(x)=f_{1}^{UL} \}   $and $ \left \{ f_{1}(x) \right| u_{1}(f_{1}(x))=v_{1}(f_{1}(x)) \}$  as the horizontal axis demarcation point to divide the area of the Figure \ref{Matching objective function}, aiming to observe the solution of $LP2$ (see Figure \ref{partitioning}). With the existence of $\gamma \left ( \gamma > 0 \right)$, it is easy to know that $y$ appears in part I or part II combined with constraint condition $ \left( b \right)$ . The worst solution of part II is better than the optimal solution of part I, that is, the optimal solution of $LP2$ appears in case 2.
    \end{proof}
\begin{figure}[!h]
\centering
\includegraphics[width=.6\textwidth]{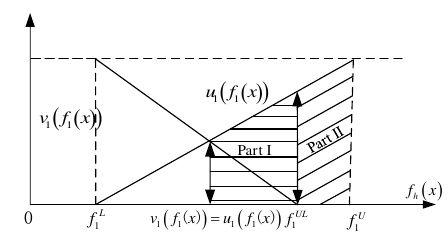}
\caption{The partitioning of Figure 5 (take $f_{1}$ as an example)}
\label{partitioning}
\end{figure} 
               
After finishing the above derivation, we turn to $LP3$:

\begin{equation}\label{35}
Lp3
\begin{cases}
\max \quad u_{1} \left ( f_{1} \left ( x \right )  \right )+u_{2} \left ( f_{2} \left ( x \right )  \right )
\\
\hbox{s.t.}~\quad x\in\Omega   \\ 
\quad\quad$\;$$\;$ f_{1}\left( x \right) \ge f_{1}^{UL}\\ 
\quad\quad$\;$$\;$ f_{2}\left( x \right) \ge f_{2}^{UL}
\end{cases}
\end{equation}

In this paper, the interactive satisfactory decision-making method is used to meet fairness factor's pre-set value interval. In detail, the model is gradually improved on the basis of the local effective solution until the final satisfactory solution is obtained. Let $\eta =s_{1}(f_{1}(x))/s_{2}(f_{2}(x))$  be the fairness measurement standard, the value interval $\left [ n_{L},n_{H} \right ]$   of $n$  is set in advance by the online vehicle-cargo matching platform. Solve $LP3$ , and if the obtained effective solution $x^*$  can satisfy $n \in \left [ n_{L},n_{H} \right ]$ ,  $x^*$ is the final satisfactory solution, otherwise a new effective solution will be found again, the strategy is:

1) When $ \eta <\eta_{L} $ , let the membership degree level of the matching side whose objective function is $f_{1}\left( x \right) $ raise from $\theta_{1}$ to $\bar{\theta _{1} }$ ;

2) When $ \eta >\eta_{H} $ , let the membership degree level of the matching side whose objective function is  $f_{2}\left( x \right) $ raise from $\theta_{2}$ to $\bar{\theta _{2} }$  ;

3) Add constraint $\mu _{h} \left ( f_{h}\left ( x \right )  \right ) \ge \bar{\theta _{h}} $ on the basis of $LP3$ and resolve it.  

If the new effective solution can meet the fairness measurement standard, it will be output as the overall satisfactory solution, then the decision-making process stops; otherwise, it will be updated according to the above strategy until the obtained effective solution meets that (see Table \ref{Algorithm1}).

\begin{table}[!h]
\caption{Algorithm1}\label{Algorithm1}
\resizebox{\textwidth}{1.8in}{
\begin{tabular}{|cl|}
\hline
Algorithm 1: & Obtaining satisfactory solution by adding fairness adjustment constraint                                                                                             \\ \hline
Input:       & $\left [ n_{L},n_{H} \right ]$  and $\gamma$                                                                                                                         \\ 
Output:      & Satisfactory matching scheme                                                                                                                                         \\ 
            & Begin                                                                                                                                                                \\ 
Step1:       & \begin{tabular}[c]{@{}l@{}}Set the fairness measurement interval of the matching platform $\left [ n_{L},n_{H} \right ]$  \\ and the value of  $\gamma$\end{tabular} \\ 
Step2:       & Solve $LP3$                                                                                                                                                          \\ 
Step3:       & Define $u_{1} \left ( f_{1} \left ( x \right )  \right )=\theta_{1} $ , $u_{2} \left ( f_{2} \left ( x \right )  \right )=\theta _{2}$                         \\ 
            & Define  $\eta=u_{1} \left ( f_{1} \left ( x \right )  \right ) / u_{2} \left ( f_{2} \left ( x \right )  \right )$                                                   \\ 
Step4:           & If $\eta_{L}\le \eta \le \eta_{H}$ then                                                                                                                              \\ 
       & \quad End                                                                                                                                             \\ 
       & Else                                                                                                                                                                 \\ 
       & \quad If $\eta \le \eta_{L}$ then                                                                                                                     \\ 
      & \quad\quad\quad  Add constraint $u_{1} \left ( f_{1} \left ( x \right )  \right ) \ge \bar{\theta _{1}}$                \\ 
      & \quad  Else                                                                                                                                           \\ 
      & \quad\quad\quad   Add constraint $u_{2} \left ( f_{1} \left ( x \right )  \right ) \ge \bar{\theta _{2}}$              \\ 
      & \quad End if                                                                                                                                         \\ 
      & End if                                                                                                                                                               \\ 
      & Return Step2                                                                                                                                                         \\ 
\hline
\end{tabular}}
\end{table}

Note: Algorithm1 adjusts the optimal matching scheme (i.e., local effective solution) obtained from $LP3$ by adding fairness adjustment constraint, that is, the satisfactory matching scheme not only considers fairness concern but also has high satisfaction.

\section{Evolutionary strategy analysis in the vehicle-cargo matching process with fairness concern}\label{4.0}
In this section, we model the realistic vehicle-cargo matching dynamic evolution process with fairness concerns and derive general results regarding such a process.
\subsection{Basic assumptions}\label{6.1}
According to the above analysis, the online platform can formulate a satisfactory vehicle-cargo matching scheme tailored to real-world circumstances. However, the successful implementation of this recommended scheme hinges on the strategic selections made by shippers and carriers. As key stakeholders in the formulation and execution of vehicle-cargo matching plans, shippers, carriers, and the online platform all possess bounded rationality and seek to maximize their individual interests by adjusting their strategic decisions. Assuming the platform's choices involve offering subsidies or not, while shippers and carriers decide whether to accept or reject the proposed matching plan, a tripartite evolutionary game model is constructed to explore these dynamics (see Table \ref{Table16}).

	\setlength\LTleft{0pt}
	\setlength\LTright{0pt}
	\begin{longtable}{@{\extracolsep{\fill}}cl}
 
		\caption{Mathematical symbols}	\label{Table16} \\
			\hline
			\multicolumn{1}{c}{\begin{tabular}[c]{@{}c@{}}Constants \\or parameter\end{tabular}} & \multicolumn{1}{c}{\begin{tabular}[c]{@{}c@{}}Explanation\end{tabular}}  \\
			\hline
			$x$    & The probability of a shipper accepting the recommended matching scheme
			\\
			$y$      & The probability of a carrier accepting the recommended matching scheme
			\\
			$z$      & The probability of the platform adopting "subsidy" strategy
			\\
   \hline 
    \multicolumn{2}{l}{\begin{tabular}[l]{@{}l@{}}  If the carrier selects "acceptance" strategy while the shipper selects "acceptance" strategy  \end{tabular}} 
   \\ \hline
			$u_{I}R_{I}$ &  Direct benefits obtained by shipper ($u_{I}$ is the service level coefficient)
			\\
			$hW_{I}$ &  {\begin{tabular}[l]{@{}l@{}} Order commission costs paid to platform  by shipper ($h$ is the percentage \\of commission)\end{tabular}}
			\\
			$u_{P}R_{P}$  & Direct benefits obtained by carrier ($u_{P}$ is the service level coefficient) 
			\\
            $fW_{P}$  &  {\begin{tabular}[l]{@{}l@{}} Order commission costs paid to platform  by carrier ($f$ is the percentage \\of commission)\end{tabular}}
			\\
    \hline 
    \multicolumn{2}{l}{\begin{tabular}[l]{@{}l@{}}  If the carrier selects "non-acceptance" strategy while the shipper selects "acceptance" \\ strategy  \end{tabular}} 
   \\ \hline
            $q_{1}$  &  {\begin{tabular}[l]{@{}l@{}} The shipper will find a matching subject by itself, the probability of \\ obtaining  a higher return $R_{I}^U$ than the matching subject recommended \\  by platform \end{tabular}}
			\\
            $q_{2}$ & {\begin{tabular}[l]{@{}l@{}} The shipper will find a matching subject by itself, the probability of\\ obtaining a lower return $R_{I}^L$ than the matching subject recommended \\  by platform \end{tabular}}
            \\ 
            $C_{I}$ & {\begin{tabular}[l]{@{}l@{}}The corresponding time cost of waiting for the carrier's response  paid \\ by shipper\end{tabular}}
			\\
            $\sigma_{1}$ & The carrier shall bear the $\sigma_{1}$ proportion of $C_{I}$
			\\
    		$Q_{I}$     &  {\begin{tabular}[l]{@{}l@{}} The follow-up cost of finding new matching subject for contact and \\ communication  paid by shipper\end{tabular}}
	        \\
			 $p_{1}$     &  {\begin{tabular}[l]{@{}l@{}} The carrier will find a matching subject by itself, the probability of \\obtaining a higher return $R_{P}^U$ than the matching subject recommended \\ by platform \end{tabular}}
            \\
			 $p_{2}$     &  {\begin{tabular}[l]{@{}l@{}} The carrier will find a matching subject by itself, the probability of\\ obtaining a lower return $R_{P}^L$ than the matching subject recommended \\ by platform\end{tabular}}
            \\
            $Q_{P}$     & {\begin{tabular}[l]{@{}l@{}}The follow-up cost of finding new matching subject for contact and \\ communication paid by carrier\end{tabular}}
	        \\
    \hline 
    \multicolumn{2}{l}{\begin{tabular}[l]{@{}l@{}}  If the shipper selects "acceptance" strategy while the carrier selects "acceptance" \\ strategy  \end{tabular}} 
   \\ \hline
           $u_{P}R_{P}$ &  Direct benefits obtained by carrier ($u_{P}$ is the service level coefficient)
			\\
			$fW_{P}$ &  {\begin{tabular}[l]{@{}l@{}} Order commission costs paid to platform  by carrier ($f$ is the \\ percentage of commission)\end{tabular}}
			\\
			$u_{I}R_{I}$  & Direct benefits obtained by shipper ($u_{I}$ is the service level coefficient)
			\\
            $hW_{I}$  &  {\begin{tabular}[l]{@{}l@{}} Order commission costs paid to platform  by shipper ($h$ is the \\ percentage of commission)\end{tabular}}
			\\
    \hline 
    \multicolumn{2}{l}{\begin{tabular}[l]{@{}l@{}}  If the shipper selects "non-acceptance" strategy while the carrier selects "acceptance" \\ strategy  \end{tabular}} 
   \\ \hline
            $p_{1}$  &  {\begin{tabular}[l]{@{}l@{}} The carrier will find a matching subject by itself, the probability of\\ obtaining a higher return $R_{P}^U$ than the matching subject recommended \\by platform \end{tabular}}
			\\
            $p_{2}$ & {\begin{tabular}[l]{@{}l@{}} The carrier will find a matching subject by itself, the probability of \\ obtaining a lower return $R_{I}^L$ than the matching subject recommended \\ by platform \end{tabular}}
            \\
            $C_{P}$ & {\begin{tabular}[l]{@{}l@{}} The corresponding time cost of waiting for the shipper's response paid \\ by carrier\end{tabular}}
			\\
            $\sigma_{2}$ & The shipper shall bear the $\sigma_{2}$ proportion of $C_{P}$
			\\
    		$Q_{P}$     & {\begin{tabular}[l]{@{}l@{}}The follow-up cost of finding new matching subject for contact and \\ communication paid by carrier\end{tabular}}
	        \\
			 $q_{1}$     &  {\begin{tabular}[l]{@{}l@{}} The shipper will find a matching subject by itself, the probability of \\obtaining a higher  return $R_{I}^U$ than the matching subject \\recommended by platform \end{tabular}}
            \\
			 $q_{2}$     &  {\begin{tabular}[l]{@{}l@{}} The shipper will find a matching subject by itself, the probability of\\ obtaining a lower return $R_{I}^L$ than the matching subject \\recommended by platform  \end{tabular}}
            \\
    		$Q_{I}$     &  {\begin{tabular}[l]{@{}l@{}}The follow-up cost of finding new matching subject for contact and \\ communication paid by shipper\end{tabular}}
	\\
    \hline 
    \multicolumn{2}{l}{\begin{tabular}[l]{@{}l@{}}  When the platform selects "subsidy"  strategy  \end{tabular}} 
   \\ \hline
			$\alpha$ & \begin{tabular}[l]{@{}l@{}}  Platform's subsidy intensity to shipper when shipper selects "acceptance" \\ strategy \end{tabular}
			\\
			$S_{I}$   &{\begin{tabular}[l]{@{}l@{}} Platform's maximum subsidy amount to shipper when shipper selects \\ "acceptance" strategy  \end{tabular}} 
			\\
			$\beta$    & {\begin{tabular}[l]{@{}l@{}} Platform's subsidy intensity to carrier when carrier selects "acceptance" \\ strategy \end{tabular}} 
			\\
			$S_{P}$  &{\begin{tabular}[l]{@{}l@{}} Platform's maximum subsidy amount to carrier when carrier selects \\ "acceptance"   strategy  \end{tabular}} 
			\\
			$D_{G}$    & {\begin{tabular}[l]{@{}l@{}} Management costs when platform formulates and implements the subsidy \\ policy\end{tabular}} 
			\\
			$F_{G}^I$ & {\begin{tabular}[l]{@{}l@{}} The reputation revenue obtained by the platform when subsidizing the \\ shipper \end{tabular}} 
            \\
            $F_{G}^P$ &{\begin{tabular}[l]{@{}l@{}}  The reputation revenue obtained by the platform when subsidizing the \\ carrier \end{tabular}} 
            \\
            $\eta$ & Fairness factor
            \\
            \hline
	\end{longtable}

The income matrix corresponding to stakeholders' various strategic selections is as follows:

\begin{table}[h!]
\begin{center}
\caption{Income matrix}\label{incometable}
\resizebox{1.\textwidth}{1.2in}{
\begin{tabular}{cccc}
\hline
\multirow{2}{*}{Shipper} & \multirow{2}{*}{Carrier} & \multicolumn{2}{c}{Platform} \\ & & subsidy ($z$) & non-subsidy (1-$z$) \\
\hline

\multirow{6}{*}{\begin{tabular}[c]{@{}c@{}}acceptance\\ ($x$)\end{tabular}} & \multirow{3}{*}{acceptance ($y$)} & $u_{I}R_{I}+\alpha S_{I}-hW_{I}$ & $u_{I}R_{I}-hW_{I}$ \\
 & & $u_{p}R_{p}+\beta S_{p}-fW_{p}$ & $u_{p}R_{p}-fW_{p}$ \\
& & $\eta(F_{G}^p+F_{G}^I)- \alpha s_{I}-\beta S_{p}-D_{G}$ & 0 \\

 & \multirow{3}{*}{\begin{tabular}[c]{@{}c@{}}non-\\ acceptance\\ (1-$y$)\end{tabular}} &  $(q_{1}R_{I}^U+q_{2}R_{I}^L)-(1-\sigma_{1})C_{I}-Q_{I}+\alpha S_{I}$ & $(q_{1}R_{I}^U+q_{2}R_{I}^L)-(1-\sigma_{1})C_{I}-Q_{I}$ \\
 & & $(p_{1}R_{p}^u+p_{2}R_{p}^l)-\sigma_{1}C_{I}-Q_{p}$ & $(p_{1}R_{p}^U+p_{2}R{p}^L)-\sigma_{1}C_{I}-Q_{p}$ \\
 & & $\eta F_{G}^I-\alpha S_{I}-D_{G}$ & 0 \\
 
\multirow{6}{*}{\begin{tabular}[c]{@{}c@{}}non-\\ acceptance\\ (1-$x$)\end{tabular}} & \multirow{3}{*}{acceptance ($y$)} & $(q_{1}R_{I}^U+q_{2}R_{I}^L)-\sigma_{2}C_{P}-Q_{I}$ & $(q_{1}R_{I}^u+q_{2}R_{I}^l)-\sigma_{2}C_{P}-Q_{I}$ \\
 & &  $(p_{1}R_{P}^U+p_{2}R_{P}^L)-(1-\sigma_{2})C_{P}-Q_{P}+\beta S_{P}$ & $(p_{1}R_{P}^U+p_{2}R_{P}^L)-(1-\sigma_{2})C_{P}-Q_{P}$ \\
 & & $\eta F_{G}^P-\beta S_{p}-D_{G}$ & 0 \\
 
 & \multirow{3}{*}{\begin{tabular}[c]{@{}c@{}}non-\\ acceptance\\ (1-$y$)\end{tabular}} & $(q_{1}R_{I}^U+q_{2}R_{I}^L)-Q_{I}$ & $(q_{1}R_{I}^U+q_{2}R_{I}^L)-Q_{I}$ \\
 & & $(p_{1}R_{P}^U+p_{2}R_{P}^L)-Q_{P}$ & $(p_{1}R_{P}^U+p_{2}R_{P}^L)-Q_{P}$ \\
 & & $-D_{G}$ & 0
 \\ \hline
\end{tabular}}
\end{center}
\end{table}

According to the Table \ref{incometable}, the dynamic replication equations of different game players can be calculated:

The dynamic replication equation of the shipper:
\begin{equation}
\resizebox{.9\hsize}{!}{$F\left( x \right) =x\left(1-x \right) \left( -C_{I}+C_{I}\sigma_{1}+\left(C_{I}-hW_{I}+Q_{I}-q_{2}R_{I}^L-q_{1}R_{I}^U+R_{I}u_{I}-C_{I}\sigma_{1}+C_{P}\sigma_{2} \right)y+\alpha S_{I}z\right)$}
\end{equation}

The dynamic replication equation of carrier:
\begin{equation}
\resizebox{.9\hsize}{!}{$F\left( y \right) =y\left(1-y \right) \left( -C_{P}+C_{P}\sigma_{2}+\left(C_{P}-fW_{P}+Q_{P}-p_{2}R_{P}^L-p_{1}R_{P}^U+R_{P}u_{P}+C_{I}\sigma_{1}-C_{P}\sigma_{2} \right)x+\beta S_{P}z\right)$}
\end{equation}      

The dynamic replication equation of the platform:
\begin{equation}
\resizebox{.5\hsize}{!}{$F\left( z \right ) =z\left(1-z \right) \left( -D_{G}+x\eta F_{G}^I+y \eta F_{G}^P-x\alpha S_{I}-y\beta S_{P} \right)$}
\end{equation}
                                                   
\subsection{Solution}
The dynamic system evolution and stability strategy can be obtained through the analysis of the Jacobian matrix. According to the dynamic replication equation of the three stakeholders, the Jacobian matrix of the replication dynamic system is:
\begin{equation}
\phi=(\phi_1, \phi_2, \phi_3)
\end{equation}
\noindent where 
$$
\phi_1=
\begin{bmatrix}
 (1-2x)\left ( -C_{I}+C_{I}\sigma_{1}+\binom{C_{I}-hW_{I}+Q_{I}
-q_{2}R_{I}^L-}{q_{1}R_{I}^U +R_{I}u_{I}-C_{I}\sigma_{1}+C_{P}\sigma_{2}} y+\alpha S_{I}z \right )
\\
y(1-y)\binom{C_{P}-fW_{P}+Q_{P}-p_{2}R_{P}^L-}{p_{1}R_{P}^U+R_{P}U_{P}+C_{I}\sigma_{1}-C_{P}\sigma_{2}}
\\
z(1-z)(\eta F_{G}^I-\alpha S_{I})
\end{bmatrix}
$$
$$
\phi_2=
\begin{bmatrix}
x(1-x)\binom{C_{I}-hW_{I}+Q_{I}-q_{2}R_{I}^L-}{q_{1}R_{I}^U+R_{I}u_{I}-C_{I}\sigma_{1}+C_{P}\sigma_{2}}
\\
(1-2y)\left ( -C_{P}+C_{P}\sigma_{2}+\binom{C_{P}-fW_{P}+Q_{P}
-p_{2}R_{P}^L-}{p_{1}R_{P}^U +R_{P}U_{P}+C_{I}\sigma_{1}-C_{P}\sigma_{2}} x+\beta S_{P}z \right )
\\
z(1-z)(\eta F_{G}^P-\beta S_{P}) 
\end{bmatrix}
$$
$$
\phi_3=
\begin{bmatrix}
x(1-x)\alpha S_{I} 
\\
y(1-y)\beta S_{P} 
\\
(1-2z)\binom{-D_{G}+x\eta F_{G}^I+}{y\eta F_{G}^P-x\alpha S_{I}-y\beta S_{P}}  
\end{bmatrix}
$$

Let the replicated dynamic equations satisfy the relationship $F(x)=F(y)=F(z)=0$, and we get 8 equilibrium points, that is $E_{1}(0,0,0),E_{2}(0,0,1),E_{3}(0,1,0),E_{4}(0,1,1),E_{5}(1,0,0)$, $E_{6}(1,0,1),E_{7}(1,1,0),E_{8}(1,1,1)$. The equilibrium points' Jacobian matrix eigenvalues can be obtained by substituting the above 8 pure strategy equilibrium points into the dynamic system Jacobian matrix respectively (see Table \ref{Jacobiantable}):

\begin{table}[h]
\centering
\caption{Eigenvalues of Jacobian matrix} \label {Jacobiantable}
{\scriptsize
\begin{tabular}{ccccc}
\hline
Category & \begin{tabular}[c]{@{}l@{}}Equilibrium point\end{tabular} & Eigenvalue $\lambda_1$    & Eigenvalue $\lambda_2$ & Eigenvalue $\lambda_3$ \\ \hline
$E_{1}$ & 0,0,0  & $ -C_{I}+C_{I} \sigma_{1}$                                 &  $ -C_{P}+C_{P} \sigma_{2}$              &    $ -D_{G}$   \\

$E_{2}$ & 0,0,1  & $-C_{I}+C_{I}\sigma_{1}+\alpha S_{I} $                     &  $ -C_{P}+C_{P} \sigma_{2}+\beta S_{P}$  &   $D_{G} $   \\

$E_{3}$ & 0,1,0  & \begin{tabular}[l]{@{}l@{}} $-hW_{I}+Q_{I}-q_{2}R_{I}^L-$\\
                   $q_{1}R_{I}^U+R_{I}u_{I}+C_{P}\sigma_{2}$ \end{tabular}     &   $C_{P}-C_{P}\sigma_{2}$  & $-D_{G}+\eta F_{G}^P-\beta S_{P}$  \\
                   
$E_{4}$ & 0,1,1  & \begin{tabular}[l]{@{}l@{}} $-hW_{I}+Q_{I}-q_{2}R_{I}^L-$\\
                   $q_{1}R_{I}^U+R_{I}u_{I}+C_{P}\sigma_{2}+\alpha S_{I}$ \end{tabular}  &  $ C_{P}-C_{P}\sigma_{2}-\beta S_{P} $  &  $D_{G}-\eta F_{G}^P+\beta S_{P}$           \\
                   
$E_{5}$ & 1,0,0  & $ C_{I}-C_{I} \sigma_{1} $     &   \begin{tabular}[l]{@{}l@{}} $-fW_{P}+Q_{p}-p_{2}R_{P}^L-$ \\ 
                                                        $p_{1}R_{P}^U+R_{P}u_{P}+C_{I}\sigma_{1}$ \end{tabular} &  $-D_{G}+\eta F_{G}^I-\alpha S_{I}$  \\
                                                        
$E_{6}$ & 1,0,1  & $ C_{I}-C_{I} \sigma_{1}-\alpha S_{I} $      &  \begin{tabular}[l]{@{}l@{}} $-fW_{P}+Q_{p}-p_{2}R_{P}^L-$ \\ 
                                                                     $p_{1}R_{P}^U+R_{P}u_{P}+C_{I}\sigma_{1}+\beta S_{P}$ \end{tabular}   &  $D_{G}-\eta F_{G}^I+\alpha S_{I}$    \\
                                                                     
$E_{7}$ & 1,1,0  & \begin{tabular}[l]{@{}l@{}} $hW_{I}-Q_{I}+q_{2}R_{I}^L+$\\
                   $q_{1}R_{I}^U-R_{I}u_{I}-C_{P}\sigma_{2}$ \end{tabular}      &    \begin{tabular}[l]{@{}l@{}} $fW_{P}-Q_{p}+p_{2}R_{P}^L+$ \\ 
                                                        $p_{1}R_{P}^U-R_{P}u_{P}-C_{I}\sigma_{1}$ \end{tabular}  &   \begin{tabular}[l]{@{}l@{}}$-D_{G}+\eta F_{G}^I+$\\$\eta F_{G}^P-\alpha S_{I}-\beta S_{P} $  \end{tabular}    \\
                                                        
$E_{8}$ & 1,1,1  & \begin{tabular}[l]{@{}l@{}} $hW_{I}-Q_{I}+q_{2}R_{I}^L+$\\
                   $q_{1}R_{I}^U-R_{I}u_{I}-C_{P}\sigma_{2}-\alpha S_{I}$ \end{tabular}   &   \begin{tabular}[l]{@{}l@{}} $fW_{P}-Q_{p}+p_{2}R_{P}^L+$ \\ 
                                                        $p_{1}R_{P}^U-R_{P}u_{P}-C_{I}\sigma_{1}-\beta S_{P} $ \end{tabular}  &  \begin{tabular}[l]{@{}l@{}}$D_{G}-\eta F_{G}^I-$\\ $\eta F_{G}^P+\alpha S_{I}+\beta S_{P} $  \end{tabular}  \\  
\hline
\end{tabular} }
\end{table}

According to Lyapunov's first method, if all the eigenvalues of the Jacobian matrix are negative, the system's equilibrium point is evolutionarily stable. When parameter variables change under different conditions, the equilibrium point's eigenvalues and the dynamic system's evolutionary path change accordingly, resulting in significant fluctuations in the stability of the equilibrium point. This paper focuses on two ideal stable states (1,1,0) and (1,1,1), determining the corresponding establishment conditions for the existence of a unique stable equilibrium point in the dynamic system, as shown in Table \ref{Conditionstab}:

\begin{table}[!htbp]
\begin{center}
\caption{Conditions for the existence of a unique stable equilibrium point (1,1,0) or  (1,1,1)}\label{Conditionstab}
{\scriptsize
\begin{tabular}{lll}
\hline
                            & A unique stable equilibrium point(1,1,0)                                                                                                                      & A unique stable equilibrium point (1,1,1)
                            \\ \hline
\multirow{4}{*}{Conditions} & $Q_{I} > hW_{I}+q_{2}R_{I}^L+q_{1}R_{I}^U-R_{I}u_{I}-C_{P}\sigma_{2}$ & $Q_{I} > hW_{I}+q_{2}R_{I}^L+q_{1}R_{I}^U-R_{I}u_{I}-C_{P}\sigma_{2}-\alpha S_{I}$ \\
                            & $Q_{P} > fW_{P}+p_{2}R_{P}^L+p_{1}R_{P}^U-R_{P}u_{P}-C_{I}\sigma_{1}$ & $Q_{P} > fW_{P}+p_{2}R_{P}^L+p_{1}R_{P}^U-R_{P}u_{P}-C_{I}\sigma_{1}-\beta S_{P}$ \\
                            & $ D_{G}>\eta F_{G}^{I}+\eta F_{G}^{P}-\alpha S_{I} -\beta S_{P}$     &    $ D_{G}<\eta F_{G}^{I}+\eta F_{G}^{P}-\alpha S_{I} -\beta S_{P}$    \\
                            &  $C_{I}-C_{I}\sigma_{1}=0 \quad \text{or} \quad C_{P}-C_{P} \sigma_{2}=0$        &      $C_{I}-C_{I}\sigma_{1}=0 \quad \text{or} \quad C_{P}-C_{P}\sigma_{2}=0$ \\
\hline
\end{tabular}}
\end{center}
\end{table}
It should be noted that, to achieve a unique stable point in the dynamic system, one of the following conditions must be met: either $\sigma _{1}=1$ or $\sigma _{2}=1$. In the former case, the carrier fully bears the shipper's time cost $C_{I}$ associated with waiting for a response when the carrier does not accept the recommended scheme but the shipper does. In the latter case, the shipper fully bears the carrier's time cost loss $C_{P}$ incurred while waiting for a response when the shipper does not accept the recommended matching scheme but the carrier does.

\section{Numerical experiment and simulation analysis}\label{5.0} 
In this section, we conduct numerical experiments and simulation analyses to verify the analytical model and explore practical implications when considering fairness concerns in vehicle-cargo matching (see Figure \ref{flow chart}).
\begin{figure}[!htbp]
\centering
\includegraphics[width=.9\textwidth]{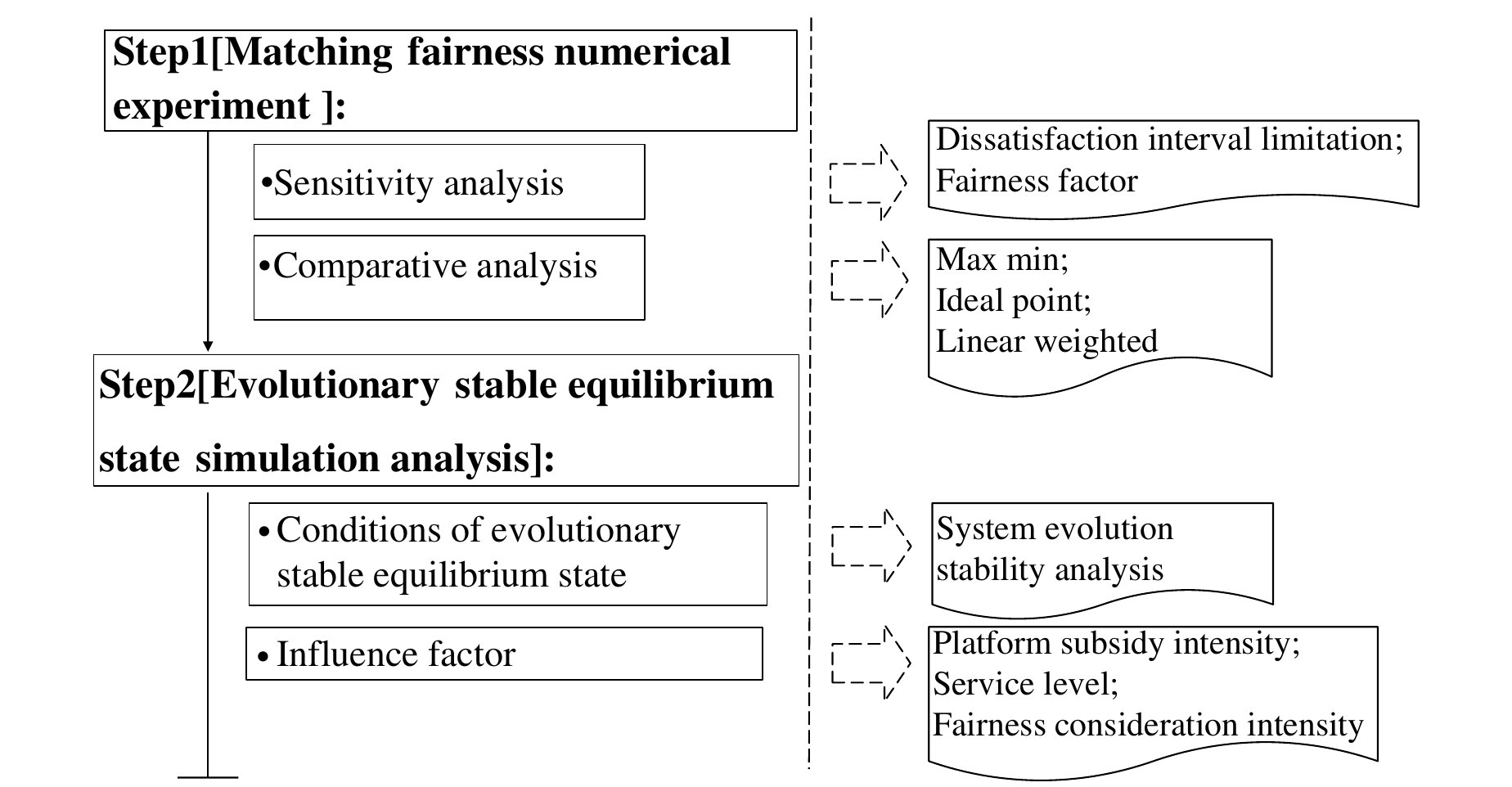}
\caption{Flow chart of numerical experiment and simulation analysis}
\label{flow chart}
\end{figure}

\pagebreak

\subsection{Numerical experiment} 
After a pre-conditional screening process, an online vehicle-cargo matching platform needs to match 8 shippers and 10 carriers in the entire vehicle transportation mode. The necessary information on expectation and reliability indicators is shown in Table \ref{Shippertab} - Table \ref{Carrierreliabilitytab} (Y represents the expectation indicator is tolerable, and N represents the expectation indicator is intolerable, specific indicator information is shown in the form of code for simplicity, i.e., $A^{3}$, $A^{8}$, $B^{3}$ and $B^{8}$), and the pre-set value interval $\eta$ of is $\left[ 0.75,1 \right]$:

\begin{table}[!htbp]
\begin{center}
\caption{Shippers' expectation indicator information}\label{Shippertab}
\resizebox{.65\textwidth}{.8in}{
\begin{tabular}{cccccc}
\hline
         & $A^{1}$ & $A^{2}$ & $A^{3}$ & $A^{8}$ & $A^{9}$ \\
         \hline
Shipper1 & {[}1,4{]} N               & 50 Y                      & Vehicle type* Y          & Location*                & -                       \\ 
Shipper2 & {[}2,6{]} Y               & 60 N                      & Vehicle type@ Y          & Location@               & -                        \\ 
Shipper3 & {[}1,3{]} N               & 75 N                      & Vehicle type@ Y          & Location*                & -                         \\ 
Shipper4 & {[}1,2{]} Y               & 55 N                      & Vehicle type@ Y          & Location$\epsilon$                & -                        \\ 
Shipper5 & {[}1,4{]} N               & 50 Y                      & Vehicle type* Y          & Location*                & -                        \\
Shipper6 & {[}2,6{]} Y               & 60 N                      & Vehicle type@ Y          & Location@               &-                         \\ 
Shipper7 & {[}1,3{]} N               & 75 N                      & Vehicle type@ Y          & Location*                & -                        \\ 
Shipper8 & {[}1,2{]} Y               & 55 N                      & Vehicle type@ Y          & Location$\epsilon$               & -                        \\ \hline          
\end{tabular}}
\end{center}
\end{table}

\begin{table}[!htbp]
\begin{center}
\caption{Carriers' expectation indicator information}\label{Carriertab}
\resizebox{.95\textwidth}{.95in}{
\begin{tabular}{cccccc}
\hline
 & $B^1$ & $B^2$ & $B^3$ & $B^8$ & $B^9$\\
 \hline
Carrier1  & 2 & 45 N & Vehicle type@ & Location$\Psi$,Location@,Location*,Location$\epsilon$  N &  -
\\
Carrier2  & 4 & 40 N & Vehicle type@ & Location@,Location$\epsilon$ ,Location* N           &  -
\\
Carrier3  & 1 & 50 Y & Vehicle type@ & Location$\epsilon$ ,Location$\Theta$,Location@,Location* N & -
      \\
Carrier4  & 3 & 35 N & Vehicle type* & Location$\epsilon$ ,Location$\wp$,Location@,Location* N & -                     \\
Carrier5  & 3 & 40 N & Vehicle type* & Location$\epsilon$ ,Location*,Location@ N           & -                     \\
Carrier6  & 2 & 45 N & Vehicle type@ & Location$\Psi$,Location@,Location*,Location$\epsilon$  N & -                     \\
Carrier7  & 4 & 40 N & Vehicle type@ & Location@,Location$\epsilon$ ,Location* N           & -                     \\
Carrier8  & 1 & 50 Y & Vehicle type@ & Location$\epsilon$ ,Location$\Theta$,Location@,Location* N & -                     \\
Carrier9  & 3 & 35 N & Vehicle type* & Location$\epsilon$ ,Location$\wp$,Location@,Location* N & -                     \\
Carrier10 & 3 & 40 N & Vehicle type* & Location$\epsilon$ ,Location*,Location@ N           & -     \\
\hline
\end{tabular}}
\end{center}
\end{table}

\begin{table}[!htbp]
\caption{Shipper's reliability indicator matrix for evaluating carriers}\label{Shipperreliabilitytab}
\resizebox{\textwidth}{.7in}{
\begin{tabular}{ccccccccccc}
\hline 
 & Carrier1 & Carrier2 & Carrier3 & Carrier4 & Carrier5 & Carrier6 & Carrier7 & Carrier8 & Carrier9 & Carrier10 \\ \hline
Shipper1 & 1.07 & 1.04 & 1.10 & 1.25 & 1.13 & 1.07 & 1.04 & 1.10 & 1.25 & 1.13 \\
Shipper2 & 1.08 & 1.14 & 0.99 & 1.41 & 1.25 & 1.08 & 1.14 & 0.99 & 1.41 & 1.25 \\
Shipper3 & 0.88 & 0.99 & 1.13 & 0.97 & 1.15 & 0.88 & 0.99 & 1.13 & 0.97 & 1.15 \\
Shipper4 & 1.30 & 1.36 & 1.12 & 1.20 & 0.72 & 1.30 & 1.36 & 1.12 & 1.20 & 0.72 \\
Shipper5 & 1.07 & 1.04 & 1.10 & 1.25 & 1.13 & 1.07 & 1.04 & 1.10 & 1.25 & 1.13 \\
Shipper6 & 1.08 & 1.14 & 0.99 & 1.41 & 1.25 & 1.08 & 1.14 & 0.99 & 1.41 & 1.25 \\
Shipper7 & 0.88 & 0.99 & 1.13 & 0.97 & 1.15 & 0.88 & 0.99 & 1.13 & 0.97 & 1.15 \\
Shipper8 & 1.30 & 1.36 & 1.12 & 1.20 & 0.72 & 1.30 & 1.36 & 1.12 & 1.20 & 0.72
\\
\hline 
\end{tabular}}
\end{table}

\begin{table}[!htbp]  
\caption{Carrier's reliability indicator matrix for evaluating shippers}\label{Carrierreliabilitytab}
\resizebox{\textwidth}{.80in}{
\begin{tabular}{ccccccccc}
\hline 
          & Shipper1 & Shipper2 & Shipper3 & Shipper4 & Shipper5 & Shipper6 & Shipper7 & Shipper8 \\ \hline 
Carrier1  & 1.16     & 1.39     & 0.91     & 1.40     & 1.16     & 1.39     & 0.91     & 1.40     \\
Carrier2  & 0.73     & 0.99     & 0.76     & 1.40     & 0.73     & 0.99     & 0.76     & 1.40     \\
Carrier3  & 0.94     & 0.92     & 0.81     & 1.15     & 0.94     & 0.92     & 0.81     & 1.15     \\
Carrier4  & 1.20     & 0.85     & 0.75     & 0.86     & 1.20     & 0.85     & 0.75     & 0.86     \\
Carrier5  & 0.73     & 1.61     & 1.11     & 1.08     & 0.73     & 1.61     & 1.11     & 1.08     \\
Carrier6  & 1.16     & 1.39     & 0.91     & 1.40     & 1.16     & 1.39     & 0.91     & 1.40     \\
Carrier7  & 0.73     & 0.99     & 0.76     & 1.40     & 0.73     & 0.99     & 0.76     & 1.40     \\
Carrier8  & 0.94     & 0.92     & 0.81     & 1.15     & 0.94     & 0.92     & 0.81     & 1.15     \\
Carrier9  & 1.20     & 0.85     & 0.75     & 0.86     & 1.20     & 0.85     & 0.75     & 0.86     \\
Carrier10 & 0.73     & 1.61     & 1.11     & 1.08     & 0.73     & 1.61     & 1.11     & 1.08    \\
\hline 
\end{tabular} }
\end{table}

Through personnel surveys and expert interviews, the indicator weights are obtained by the RSDAT indicator weighting method (see Table \ref{weighttab}), since the attribute indicator does not participate in the comprehensive satisfaction aggregation, its weights are all equal to 0.

\begin{table}[!htbp]
\centering
\caption{The weights of the indicators considered by both matching sides}\label{weighttab}
\resizebox{.8\textwidth}{.25in}{
\begin{tabular}{cccccccccc}
\hline
 &
  \multicolumn{1}{l}{$w(A^{1})$} &
  \multicolumn{1}{l}{$w(A^{2})$} &
  \multicolumn{1}{l}{$w(A^{3})$} &
  \multicolumn{1}{l}{$w(A^{4})$} &
  \multicolumn{1}{l}{$w(A^{5})$} &
  \multicolumn{1}{l}{$w(A^{6})$} &
  \multicolumn{1}{l}{$w(A^{7})$} &
  \multicolumn{1}{l}{$w(A^{8})$} &
  \multicolumn{1}{l}{$w(A^{9})$} \\
  \hline
Shipper-side &
  0.28 &
  0.26 &
  0.18 &
  0 &
  0 &
  0 &
  0 &
  0 &
  0.28 \\
Carrier-side &
  0 &
  0.43 &
  0 &
  0 &
  0 &
  0.24 &
  0 &
  0 &
  0.33\\
\hline
\end{tabular}}
\end{table}

The comprehensive satisfaction among vehicle-cargo matching subjects is obtained (see Table \ref{shippersatisfactiontab} - Table \ref{carriersatisfactiontab}).

\begin{table}[!htbp]
\centering
\caption{Shipper's satisfaction to carrier}\label{shippersatisfactiontab}
\resizebox{\textwidth}{.7in}{
\begin{tabular}{ccccccccccc}
\hline 
 & Carrier1 & Carrier2 & Carrier3 & Carrier4 & Carrier5 & Carrier6 & Carrier7 & Carrier8 & Carrier9 & Carrier10 \\
 \hline 
Shipper1 & 0.60 & 0.51 & 0.62 & 0.94 & 0.85 & 0.60 & 0.51 & 0.62 & 0.94 & 0.85 \\
Shipper2 & 0.96 & 0.91 & 0.61 & 0.81 & 0.74 & 0.96 & 0.91 & 0.61 & 0.81 & 0.74 \\
Shipper3 & 0.83 & 0.67 & 0.98 & 0.54 & 0.58 & 0.83 & 0.67 & 0.98 & 0.54 & 0.58 \\
Shipper4 & 0.81 & 0.73 & 0.92 & 0.48 & 0.31 & 0.81 & 0.73 & 0.92 & 0.48 & 0.31 \\
Shipper5 & 0.60 & 0.51 & 0.62 & 0.94 & 0.85 & 0.60 & 0.51 & 0.62 & 0.94 & 0.85 \\
Shipper6 & 0.96 & 0.91 & 0.61 & 0.81 & 0.74 & 0.96 & 0.91 & 0.61 & 0.81 & 0.74 \\
Shipper7 & 0.83 & 0.67 & 0.98 & 0.54 & 0.58 & 0.83 & 0.67 & 0.98 & 0.54 & 0.58 \\
Shipper8 & 0.81 & 0.73 & 0.92 & 0.48 & 0.31 & 0.81 & 0.73 & 0.92 & 0.48 & 0.31\\ \hline
\end{tabular} }
\end{table}

\begin{table}[!htbp]
\centering
\caption{Carrier's satisfaction to shipper}\label{carriersatisfactiontab}
\resizebox{\textwidth}{.8in}{
\begin{tabular}{ccccccccc}
\hline 
          & Shipper1 & Shipper2 & Shipper3 & Shipper4 & Shipper5 & Shipper6 & Shipper7 & Shipper8 \\
          \hline
Carrier1  & 0.74     & 0.93     & 0.87     & 0.84     & 0.74     & 0.93     & 0.87     & 0.84     \\
Carrier2  & 0.62     & 0.88     & 0.82     & 0.92     & 0.62     & 0.88     & 0.82     & 0.92     \\
Carrier3  & 0.60     & 0.72     & 0.82     & 0.84     & 0.60     & 0.72     & 0.82     & 0.84     \\
Carrier4  & 0.78     & 0.75     & 0.80     & 0.82     & 0.78     & 0.75     & 0.80     & 0.82     \\
Carrier5  & 0.66     & 0.98     & 0.97     & 0.87     & 0.66     & 0.98     & 0.97     & 0.87     \\
Carrier6  & 0.74     & 0.93     & 0.87     & 0.84     & 0.74     & 0.93     & 0.87     & 0.84     \\
Carrier7  & 0.62     & 0.88     & 0.82     & 0.92     & 0.62     & 0.88     & 0.82     & 0.92     \\
Carrier8  & 0.60     & 0.72     & 0.82     & 0.84     & 0.60     & 0.72     & 0.82     & 0.84     \\
Carrier9  & 0.78     & 0.75     & 0.80     & 0.82     & 0.78     & 0.75     & 0.80     & 0.82     \\
Carrier10 & 0.66     & 0.98     & 0.97     & 0.87     & 0.66     & 0.98     & 0.97     & 0.87     \\ \hline 
\end{tabular} } 
\end{table}

According to Eqs. (\ref{29} - \ref{35}), we can get $f_{1}^L=3.94,f_{1}^U=7.28,f_{2}^L=5.96,f_{2}^U=7.2,f_{1}^{UL}=6.61,f_{2}^{UL}=6.95$ by programming CPLEX, and the satisfactory matching scheme is (1,4),(2,1),(3,10),(4,7),(5,9),(6,6),(7,3),(8,2). Through substituting the obtained satisfactory matching scheme into Eq. (\ref{33}), we find that shipper side's satisfaction is 0.86 and carrier side's satisfaction is 0.88, the overall satisfaction is 1.74, $\eta=u_{1} \left ( f_{1} \left ( x \right )  \right ) / u_{2} \left ( f_{2} \left ( x \right )  \right )=0.98 $, which satisfies the fairness measurement standard.

\subsubsection{Sensitivity analysis}
By adjusting the value of parameter $\gamma$ (i.e., changing the two matching sides' dissatisfaction interval limitation), we can get different optimal matching schemes obtained from $LP3$, which affects the fairness factor directly (see Table \ref{Algorithm2}). To elucidate the improvement model's satisfactory vehicle-cargo matching scheme, we will provide further explanations.
\begin{table}[!htbp] 
\caption{Algorithm2}\label{Algorithm2}
\begin{tabular}{|cl|}
\hline
Algorithm 2: & Adjusting the value of parameter $\gamma $ to test the output of matching scheme
\\ \hline
Input:       & $\left [ n_{L},n_{H} \right ]$                                                                                                                         \\ 
Output:      & Matching scheme's sensitivity analysis                                                                                                                                       \\ 
            & Begin                                                                                                                                                                \\ 
Step1:       &  Set the satisfaction measurement interval of the matching platform$\left[  n_{L},n_{H}\right] $ \\ 
Step2:       &  For $\gamma$ =0.1:0.1:1  do                                 \\ 
           & \quad Run step3 $\sim $ step5 of algorithm 1                      \\ 
          & End for                                                  \\ 
          & End                                               \\  
\hline
\end{tabular}
\end{table}

For the case in this paper, sensitivity analysis is carried out from the perspective of the shipper side and the carrier side respectively, and the value of parameter $\gamma$  changes according to the construction of the non-membership degree function (step size is 0.1, from 0.1 to 1). Through programming CPLEX, the output results can be obtained (see Figure \ref{Figsensitivity} and Table \ref{Tabsensitivity}):

\begin{figure}[H]
\centering
\includegraphics[width=\textwidth]{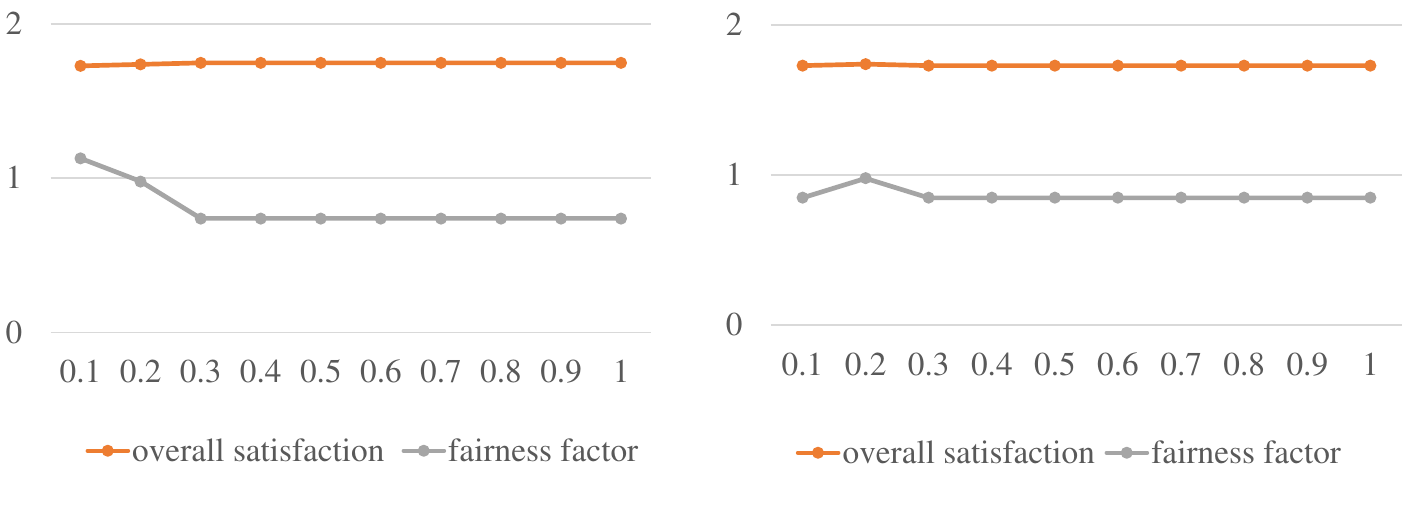}
\caption{Shipper-side and carrier-side sensitivity analysis}
\label{Figsensitivity}
\end{figure}

\begin{table}[H]
\centering
\caption{Sensitivity  analysis result}\label{Tabsensitivity}
\begin{tabular}{ccccccccccc}
\hline 
 &
  \multicolumn{10}{c}{Parameter} \\
  \hline 
 &
  0.1 &
  $\eta$ &
  0.2 &
  $\eta$ &
  0.3 &
  $\eta$ &
  0.4 &
  $\eta$ &
  0.5 &
  $\eta$ \\
  \hline 
Shipper-side &
  6.946 &
  \multirow{2}{*}{1.13} &
  6.612 &
  \multirow{2}{*}{0.98} &
  6.278 &
  \multirow{2}{*}{0.74} &
  5.944 &
  \multirow{2}{*}{0.74} &
  5.610 &
  \multirow{2}{*}{0.74} \\
Overall satisfaction &
  1.73 &
   &
  1.74 &
   &
  1.75 &
   &
  1.75 &
   &
  1.75 &
   \\
   \hline
Matching scheme &
  \multicolumn{2}{l}{\begin{tabular}[c]{@{}l@{}}(1,4)(2,1)\\ (3,5)(4,8)\\ (5,9)(6,6)\\ (7,3)(8,2)\end{tabular}} &
  \multicolumn{2}{l}{\begin{tabular}[c]{@{}l@{}}(1,4)(2,1)\\ (3,5)(4,8)\\ (5,9)(6,6)\\ (7,3)(8,2)\end{tabular}} &
  \multicolumn{2}{l}{\begin{tabular}[c]{@{}l@{}}(1,4)(2,1)\\ (3,5)(4,8)\\ (5,9)(6,6)\\ (7,3)(8,2)\end{tabular}} &
  \multicolumn{2}{l}{\begin{tabular}[c]{@{}l@{}}(1,4)(2,1)\\ (3,5)(4,8)\\ (5,9)(6,6)\\ (7,3)(8,2)\end{tabular}} &
  \multicolumn{2}{l}{\begin{tabular}[c]{@{}l@{}}(1,4)(2,1)\\ (3,5)(4,8)\\ (5,9)(6,6)\\ (7,3)(8,2)\end{tabular}} \\
  \hline
Carrier-side &
  7.076 &
  \multirow{2}{*}{0.85} &
  6.952 &
  \multirow{2}{*}{0.98} &
  6.828 &
  \multirow{2}{*}{0.85} &
  6.704 &
  \multirow{2}{*}{0.85} &
  6.580 &
  \multirow{2}{*}{0.85} \\
 
Overall satisfaction &
  1.73 &
   &
  1.74 &
   &
  1.73 &
   &
  1.73 &
   &
  1.73 &
   \\
   \hline
Matching scheme &
  \multicolumn{2}{l}{\begin{tabular}[c]{@{}l@{}}(1,4)(2,1)\\ (3,5)(4,3)\\ (5,9)(6,6)\\ (7,10)(8,2)\end{tabular}} &
  \multicolumn{2}{l}{\begin{tabular}[c]{@{}l@{}}(1,4)(2,1)\\ (3,10)(4,7)\\ (5,9)(6,6)\\ (7,3)(8,2)\end{tabular}} &
  \multicolumn{2}{l}{\begin{tabular}[c]{@{}l@{}}(1,9)(2,6)\\ (3,5)(4,8)\\ (5,9)(6,1)\\ (7,10)(8,7)\end{tabular}} &
  \multicolumn{2}{l}{\begin{tabular}[c]{@{}l@{}}(1,9)(2,6)\\ (3,5)(4,8)\\ (5,4)(6,1)\\ (7,10)(8,7)\end{tabular}} &
  \multicolumn{2}{l}{\begin{tabular}[c]{@{}l@{}}(1,9)(2,6)\\ (3,5)(4,8)\\ (5,4)(6,1)\\ (7,10)(8,7)\end{tabular}} \\
  \hline
 &
  0.6 &
  $\eta$ &
  0.7 &
  $\eta$ &
  0.8 &
  $\eta$ &
  0.9 &
  $\eta$ &
  1 &
  $\eta$ \\
  \hline

Shipper-side &
  5.276 &
  \multirow{2}{*}{0.74} &
  4.942 &
  \multirow{2}{*}{0.74} &
  4.608 &
  \multirow{2}{*}{0.74} &
  4.274 &
  \multirow{2}{*}{0.74} &
  3.940 &
  \multirow{2}{*}{0.74} \\

Overall satisfaction &
  1.75 &
   &
  1.75 &
   &
  1.75 &
   &
  1.75 &
   &
  1.75 &
   \\
   \hline
Matching scheme &
  \multicolumn{2}{l}{\begin{tabular}[c]{@{}l@{}}(1,4)(2,1)\\ (3,10)(4,7)\\ (5,9)(6,6)\\ (7,5)(8,2)\end{tabular}} &
  \multicolumn{2}{l}{\begin{tabular}[c]{@{}l@{}}(1,4)(2,1)\\ (3,10)(4,7)\\ (5,9)(6,6)\\ (7,5)(8,2)\end{tabular}} &
  \multicolumn{2}{l}{\begin{tabular}[c]{@{}l@{}}(1,4)(2,1)\\ (3,10)(4,7)\\ (5,9)(6,6)\\ (7,5)(8,2)\end{tabular}} &
  \multicolumn{2}{l}{\begin{tabular}[c]{@{}l@{}}(1,4)(2,1)\\ (3,10)(4,7)\\ (5,9)(6,6)\\ (7,5)(8,2)\end{tabular}} &
  \multicolumn{2}{l}{\begin{tabular}[c]{@{}l@{}}(1,4)(2,1)\\ (3,10)(4,7)\\ (5,9)(6,6)\\ (7,5)(8,2)\end{tabular}} \\
  \hline
Carrier-side &
  6.456 &
  \multirow{2}{*}{0.85} &
  6.332 &
  \multirow{2}{*}{0.85} &
  6.208 &
  \multirow{2}{*}{0.85} &
  6.084 &
  \multirow{2}{*}{0.85} &
  5.960 &
  \multirow{2}{*}{0.85} \\
  
Overall satisfaction &
  1.73 &
   &
  1.73 &
   &
  1.73 &
   &
  1.73 &
   &
  1.73 &
   \\
   \hline
Matching scheme &
  \multicolumn{2}{l}{\begin{tabular}[c]{@{}l@{}}(1,9)(2,6)\\ (3,5)(4,8)\\ (5,4)(6,1)\\ (7,10)(8,7)\end{tabular}} &
  \multicolumn{2}{l}{\begin{tabular}[c]{@{}l@{}}(1,9)(2,6)\\ (3,5)(4,8)\\ (5,4)(6,1)\\ (7,10)(8,7)\end{tabular}} &
  \multicolumn{2}{l}{\begin{tabular}[c]{@{}l@{}}(1,9)(2,6)\\ (3,5)(4,8)\\ (5,4)(6,1)\\ (7,10)(8,7)\end{tabular}} &
  \multicolumn{2}{l}{\begin{tabular}[c]{@{}l@{}}(1,9)(2,6)\\ (3,5)(4,8)\\ (5,4)(6,1)\\ (7,10)(8,7)\end{tabular}} &
  \multicolumn{2}{l}{\begin{tabular}[c]{@{}l@{}}(1,9)(2,6)\\ (3,5)(4,8)\\ (5,4)(6,1)\\ (7,10)(8,7)\end{tabular}}
  \\
  \hline
\end{tabular}
\end{table}

Because $f_{1}^{UL}=f_{1}^{U}-\gamma \left( f_{1}^U- f_{1}^L \right)$,$f_{2}^{UL}=f_{2}^{U}-\gamma \left( f_{2}^U- f_{2}^L \right)$, the change of the parameter $\gamma$ will directly affect the non-membership degree function $v_{1}\left( f_{1}(x) \right) $ and $v_{2}\left( f_{2}(x) \right) $ (which  equals to change the slope of  Figure \ref{Matching objective function}, see Figure \ref{the objective function non-membership function}), then the optimal matching scheme of will be affected. Besides, the overall satisfaction $s_{h} \left ( f_{h} \left ( x \right )  \right ) =u_{h} \left ( f_{h} \left ( x \right )  \right )-v_{h} \left ( f_{h} \left ( x \right )  \right ), h=1,2$ and the value of the fairness factor $\eta =s_{1}(f_{1}(x))/s_{2}(f_{2}(x))$  will be changed continuously.

\begin{figure}[!htbp]
\centering
\includegraphics[width=.6\textwidth]{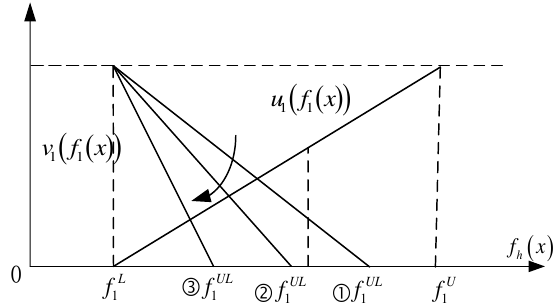}
\caption{$\gamma$ affects the objective function non-membership function (take $f_{1}$ as an example)}
\label{the objective function non-membership function}
\end{figure}

In the first case, the changed value is $f_{1}^{UL}$, and the unchanged value is $f_{2}^{UL}$. When $\gamma$ = 0.1, the model outputs the optimal matching scheme (1,4), (2,1), (3,5), (4,8), (5,9), (6,6), (7,3), (8,2) obtained from $LP3$. At this time, the overall satisfaction is 1.73 and the fairness factor is 1.13, which does not meet the fairness factor's pre-set value interval. When $\gamma$ =0.2, it is the satisfactory matching scheme. When $\gamma$ changes from 0.3 to 1 (step size 0.1), the model outputs the same matching scheme (1,4), (2,1), (3,10), (4,7), (5,9), (6,6), (7,5), (8,2) obtained from $LP3$, the overall satisfaction is 1.75, the fairness factor is 0.74, which does not meet the fairness factor's pre-set value interval. 

In the second case, the changed value becomes $f_{2}^{UL}$, while the unchanged value is $f_{1}^{UL}$  .When $\gamma$ = 0.1, the model outputs the optimal matching scheme $(1,4),(2,1),(3,5),(4,3),(5,9)$,  $(6,6),(7,10),(8,2)$ obtained from $LP3$. At this time, the overall satisfaction is 1.73 and the fairness factor is 0.85, which does not meet the fairness factor's pre-set value interval. When $\gamma$  = 0.2, it is the satisfactory matching scheme. When $\gamma$ changes from 0.3 to 1 (step size 0.1), the model outputs the same matching scheme (1,9), (2,6), (3,5), (4,8), (5,4), (6,1), (7,10), (8,7) obtained from $LP3$, the overall satisfaction is 1.73, the fairness factor is 0.85, which does not meet the fairness factor's pre-set value interval.

Through the sensitivity analysis, it can be seen that the larger the value of $\gamma$, the smaller the decision maker's unsatisfactory interval. Hence, if the fairness factor does not meet the decision-maker's pre-set value interval, it can be adjusted by changing the value of $\gamma$  or adding fairness adjustment constraints. The overall satisfaction of the optimal matching scheme obtained from $LP3$ will change, and the fairness factor will also change accordingly.

\subsubsection{Comparative analysis}
In order to demonstrate the effectiveness of the proposed solution method, we conduct a comparative analysis with three benchmark methods for solving dual-objective matching models, namely, the max-min method, the ideal point method, and the linear weighting method (see Table \ref{Comparetab}). 

\begin{table}[!h]
\centering
\caption{Satisfactory matching scheme obtained by different solving methods}\label{Comparetab}
\resizebox{.9\textwidth}{.9in}{
\begin{tabular}{ccccccccc}
\hline 
\multirow{2}{*}{Methods} &
  \multirow{2}{*}{Matching scheme} &
  \multicolumn{3}{c}{\begin{tabular}[c]{@{}c@{}}$LP1$\\ (Before improvement)\end{tabular}} &
  \multicolumn{4}{c}{\begin{tabular}[c]{@{}c@{}}$LP3$\\ (After improvement)\end{tabular}} \\

 &
   &
  $f_{1}$ &
  $f_{2}$ &
  $f_{1}+f_{2}$ &
  $s_{1}$ &
  $s_{2}$ &
  $s_{1}+s_{2}$ &
  $\eta$ \\ 
    \hline 
Max\_min &
  \begin{tabular}[c]{@{}c@{}}(1,4)(2,1)(3,8)(4,7)\\ (5,9)(6,6)(7,3)(8,2)\end{tabular} &
  7.22 &
  6.9 &
  14.12 &
  0.98 &
  0.71 &
  1.69 &
  1.39 \\
Ideal point &
  \begin{tabular}[c]{@{}c@{}}(1,4)(2,1)(3,8)(4,7)\\ (5,9)(6,6)(7,3)(8,2)\end{tabular} &
  7.22 &
  6.9 &
  14.12 &
  0.98 &
  0.71 &
  1.69 &
  1.39 \\
Linear weighted &
  \begin{tabular}[c]{@{}c@{}}(1,4)(2,1)(3,8)(4,7)\\ (5,9)(6,6)(7,3)(8,2)\end{tabular} &
  7.22 &
  6.9 &
  14.12 &
  0.98 &
  0.71 &
  1.69 &
  1.39 \\
Proposed method &
  \begin{tabular}[c]{@{}c@{}}(1,4)(2,1)(3,10)(4,7)\\ (5,9)(6,6)(7,3)(8,2)\end{tabular} &
  6.82 &
  7.05 &
  13.87 &
  0.86 &
  0.88 &
  1.74 &
  0.98\\
\hline 
\end{tabular} }
\end{table}

As Table \ref{Comparetab} shows, the three benchmark methods yield the same matching scheme, which is different from that of the proposed method. Referring to $LP1$ (before improvement), the overall satisfaction ($f_{1}+f_{2}$) of the satisfactory matching scheme obtained by the method proposed in this paper is lower than the satisfactory matching scheme obtained by the other three methods; however, referring to $LP3$ (after improvement), its overall satisfaction ($s_{1}+s_{2} $) is the highest, and the fairness factor meets the pre-set value interval. 

The main reason for this difference is: that the matching scheme obtained by the other traditional methods makes the overall satisfaction ($f_{1}+f_{2}$) of $LP1$  (before improvement) reach the maximum level; however, $LP3$ (after improvement) has unsatisfactory interval limitation to integrate the matching objective function's non-membership degree information, the value of $f_{2}$ is less than $f_{2}^{UL}$ (i.e.,  $f_{2}=6.9<f_{2}^{UL}=6.95$) compared with the satisfactory matching scheme obtained by the proposed method, the matching scheme causes a decrease in the value of $u_{2}\left( f_{2} \left( x \right) \right)$  and an increase in the value of  $v_{2}\left( f_{2} \left( x \right) \right)$, resulting in a decrease in the satisfaction of $s_{2}$. The decrease in satisfaction of $s_{2}$ (-0.17) is greater than the increase in satisfaction of $s_{1}$ (+0.12), which eventually leads to a decrease in overall satisfaction (-0.05), and its fairness factor does not meet the pre-set value interval, so it is excluded from the solution process of $LP3$  .

Based on the intuitionistic fuzzy set theory, our improved model fully incorporates the membership degree, non-membership degree, and hesitation degree information of the two matching objectives. Additionally, the model considers the fairness factor to obtain a satisfactory matching scheme that meets the pre-set value interval. In contrast, traditional methods neglect this information, so its satisfactory matching scheme decreases the overall satisfaction ($s_{1}+s_{2}$) of $LP3$ (after improvement), and the fairness factor cannot meet the pre-set value interval. The above analysis shows the effectiveness of the method proposed in this paper.

\subsection{Simulation analysis}\label{7.0}

The evolutionary game model centers on the strategic selection of stakeholders, with a primary emphasis on uncovering the regularities governing the dynamic system's evolution. Consequently, ensuring the accuracy of the dynamic system's structure takes precedence over selecting parameter values \citep{CGL2018,ZFL2020}. To conduct a comprehensive and systematic examination of stakeholder strategic selection, we have devised a simulation analysis. The choice of all simulation parameters' values in this paper does not represent all parties' payment or profit values in reality. Instead, they primarily serve to explore the impact of variations in relevant parameter values on the strategic selections made by stakeholders.
\subsubsection{Data and scenarios}
According to “The 2022 “Sunshine Action” Work Plan on Building a New Transport Business Platform for Enterprises' proportion of commission” issued by the Ministry of Transport of the People's Republic of China, this paper sets the percentage of commission to 10\%, which is no more than its upper limit (18\%-30\%).

Based on the basic assumptions in Section \ref{6.1}, we have:
\begin{equation}
q_{1}+q_{2}=1
\end{equation}
\begin{equation}
p_{1}+p_{2}=1
\end{equation}
                                                                           
$Sub_{I}$ and $Sub_{P}$ represent the platform's subsidy amount when shipper and carrier select “acceptance” strategy, we have:
\begin{equation}
Sub_{I}=\alpha S_{I}
\end{equation}
\begin{equation}
Sub_{P}=\alpha S_{P}
\end{equation}
                                                                                 
However, some exogenous variables are difficult to quantify in the actual environment directly (i.e., $D_{G}$ , $F_{G}^I$ and $F_{G}^P$  ), which will be determined by reasonable assumptions based on Table \ref{incometable}. In detail, when the dynamic system has a unique stable equilibrium point (1,1,0), there is array 1:  $C_{I}=5,\sigma _{1} =1,h=0.1,W_{I}=20,Q_{I}=10,q_{2}=0.6,R_{I}^L=25,q_{1}=0.4,R_{I}^U=35,R_{I}=30,u_{I}=0.7,C_{p}=5,\sigma _{2} =0.2,\alpha=0.6,S_{I}=10,f=0.1, W_{p}=20,Q_{p}=10,p_{2}=0.6,R_{P}^L=25,p_{1}=0.4,R_{p}^U=35,R_{p}=30,u_{p}=0.7, \beta=0.6,S_{p}=10,D_{G}=3,\eta=0.7,F_{G}^I=10,F_{G}^P=10$.When the dynamic system has a unique stable equilibrium point (1,1,1), there is an array 2:$D_{G}=0.5$, and the rest of the parameters are the same as array 1.

\subsubsection{Model checking}
In order to test the validity of the dynamic system evolution stability analysis in the paper, the parameters of array 1 ($E_{7}$) and array 2 ($E_{8}$) were substituted into the model for MATLAB simulation (The change step size of $x$, $y$, $z$ is 0.2:0.2:0.8). The simulation results are shown in Figure \ref{Fig11Fig12}:

\begin{figure}[htbp]
\centering
\includegraphics[width=.70\textwidth]{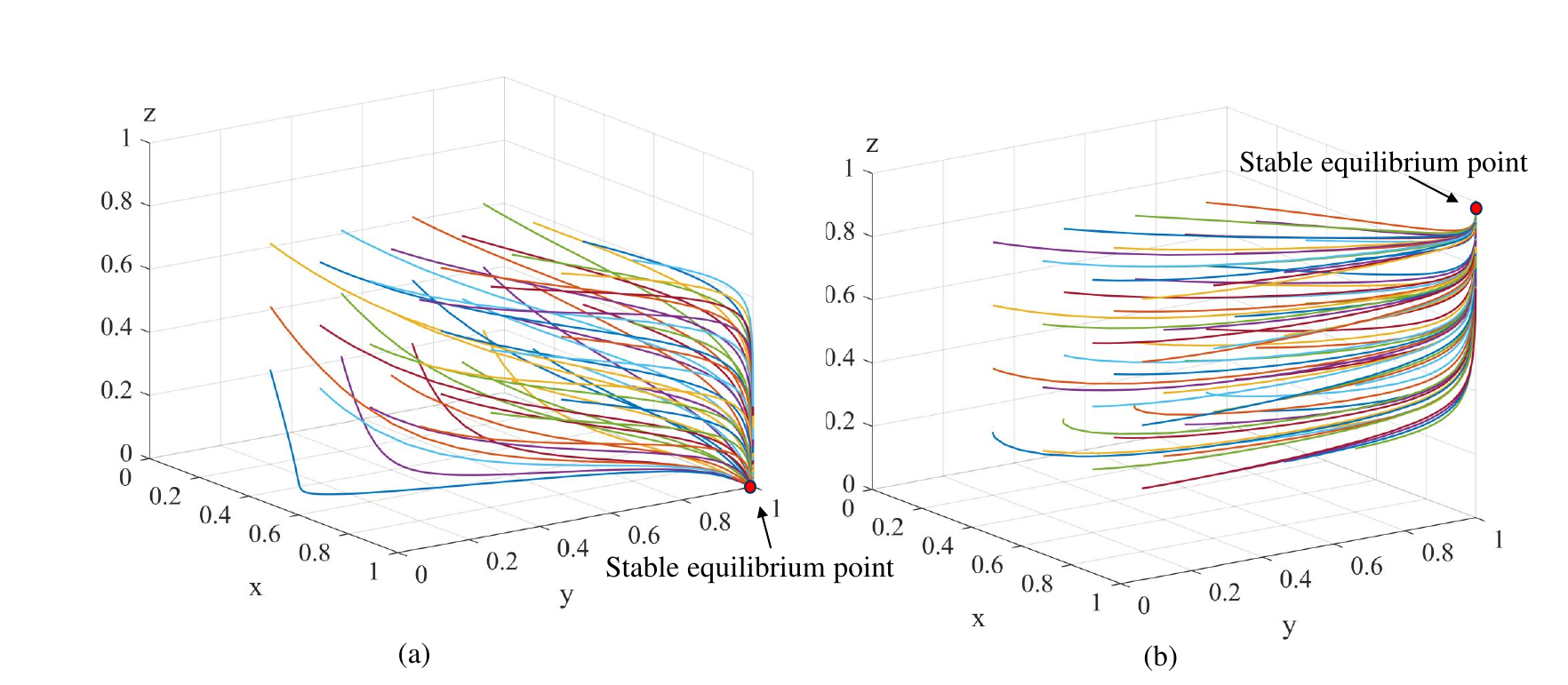}
\caption{System simulation diagram of array 1 and array 2}
\label{Fig11Fig12}
\end{figure}

It can be seen from these figures that under the condition of array 1 (see Figure \ref{Fig11Fig12}a), the system eventually evolves to (1,1,0), which is consistent with the conclusion of $E_{7}$. Under array 2 (see Figure \ref{Fig11Fig12}b), the system will eventually evolve to (1,1,1), consistent with $E_{8}$. The simulation analysis results are consistent with the stability analysis of system evolution. The tripartite evolutionary game model is effective and has important practical guiding significance. In order to eliminate the influence of the initial probability value setting, the initial probability value of x, y, z is all fixed at 0.6, and the influence of platform subsidy intensity, service level, and fairness factor on the system evolution results will be discussed respectively.
\subsubsection{Platform subsidy intensity $\alpha$} 
In order to study the influence of changing platform subsidy intensity $\alpha$ on the system evolution results under $E_{7}$  and $E_{8} $, the other parameters of array 1 and array 2 are kept unchanged, and the value of $\alpha$ is changed by 0.2:0.2:0.8. The simulation results are shown in Figure \ref{Fig13Fig14}:

\begin{figure}[h]  
\centering
\includegraphics[width=.70\textwidth]{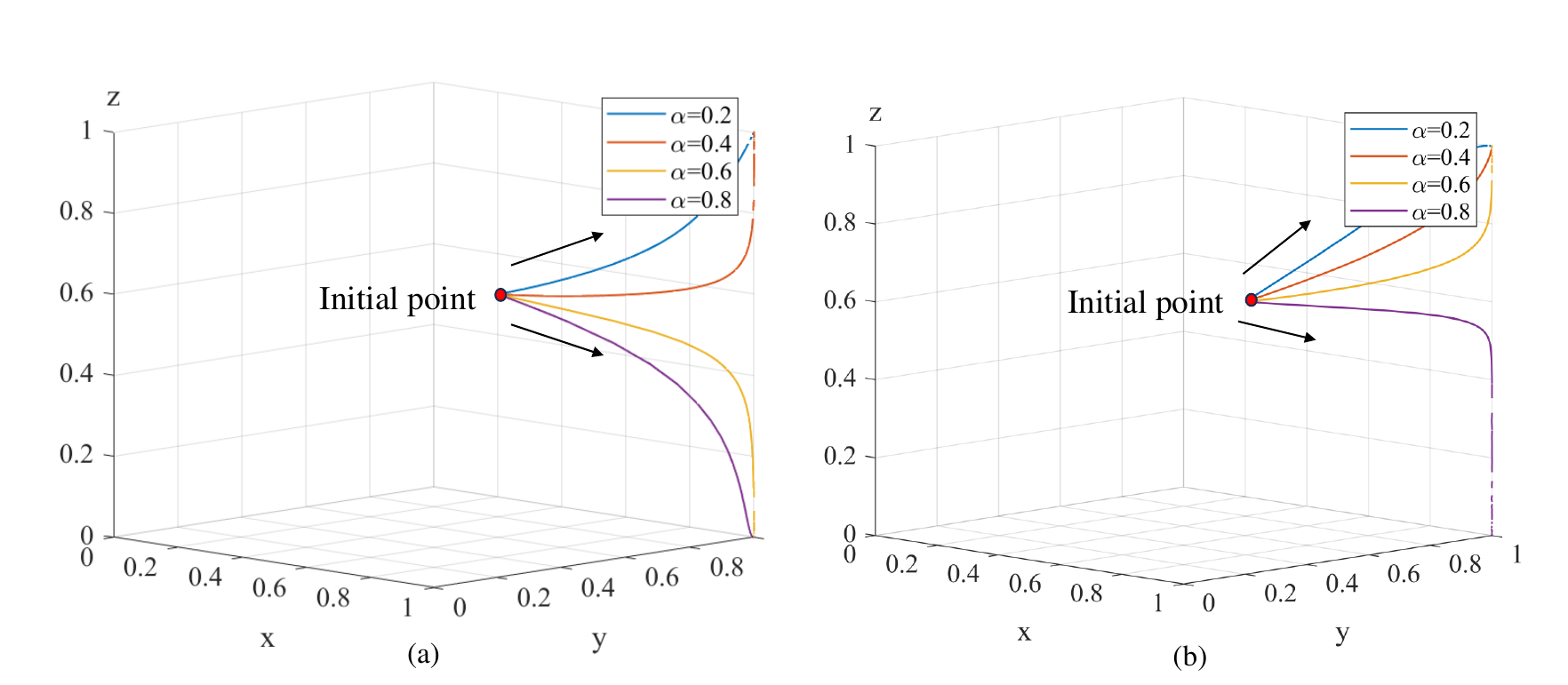}
\caption{$\alpha$ evolutionary game simulation of array 1 and array 2}
\label{Fig13Fig14}
\end{figure}

It can be seen from Figure \ref{Fig13Fig14}a that, under array 1, the critical value of $\alpha$   is between 0.4 and 0.6. When $\alpha$  is less than this critical value, the shipper, carrier, and the platform finally select acceptance, acceptance, and subsidy respectively, the dynamic system evolutionary stable point is (1,1,1); when   $\alpha$ exceeds this critical value, shipper and carrier still choose acceptance, but the platform will choose no subsidy, resulting in the dynamic system evolutionary stable point of (1,1,0). According to the change rate of the evolution curve, it can also be seen that when the value of $\alpha$ increases from 0.6, the rate at which the dynamic system evolves to (1,1,0) speeds up. This is because under array 1, even if the platform does not subsidize, the shipper and the carrier will also gain considerable benefits by accepting the matching scheme recommended by the platform, that is, no subsidy doesn't influence the carrier's and shipper's strategic selection; when the value of $\alpha$ starts to decrease from 0.4, the rate which dynamic system evolves to (1,1,1) speeds up, it can be seen that the appropriate subsidy intensity can not only save the subsidy cost but also accelerate the rate which dynamic system evolves to (1,1,1) stable state. Compared with the condition of array 1, the critical value of $\alpha$ is between 0.6 and 0.8 in the condition of array 2 (see Figure \ref{Fig13Fig14}b), the rest are the same which will not be repeated here.

\subsubsection{ Platform subsidy intensity $\beta$}
In order to study the influence of changing platform subsidy intensity $\beta $ on the system evolution results under $ E_{7} $ and  $ E_{8} $, the other parameters of arrays 1 and array 2 are kept unchanged, and the value of $\beta $ is changed by 0.2:0.2:0.8. The simulation results are shown in Figure \ref{Fig15Fig16}:

\begin{figure}[h!]
\centering
\includegraphics[width=.70\textwidth]{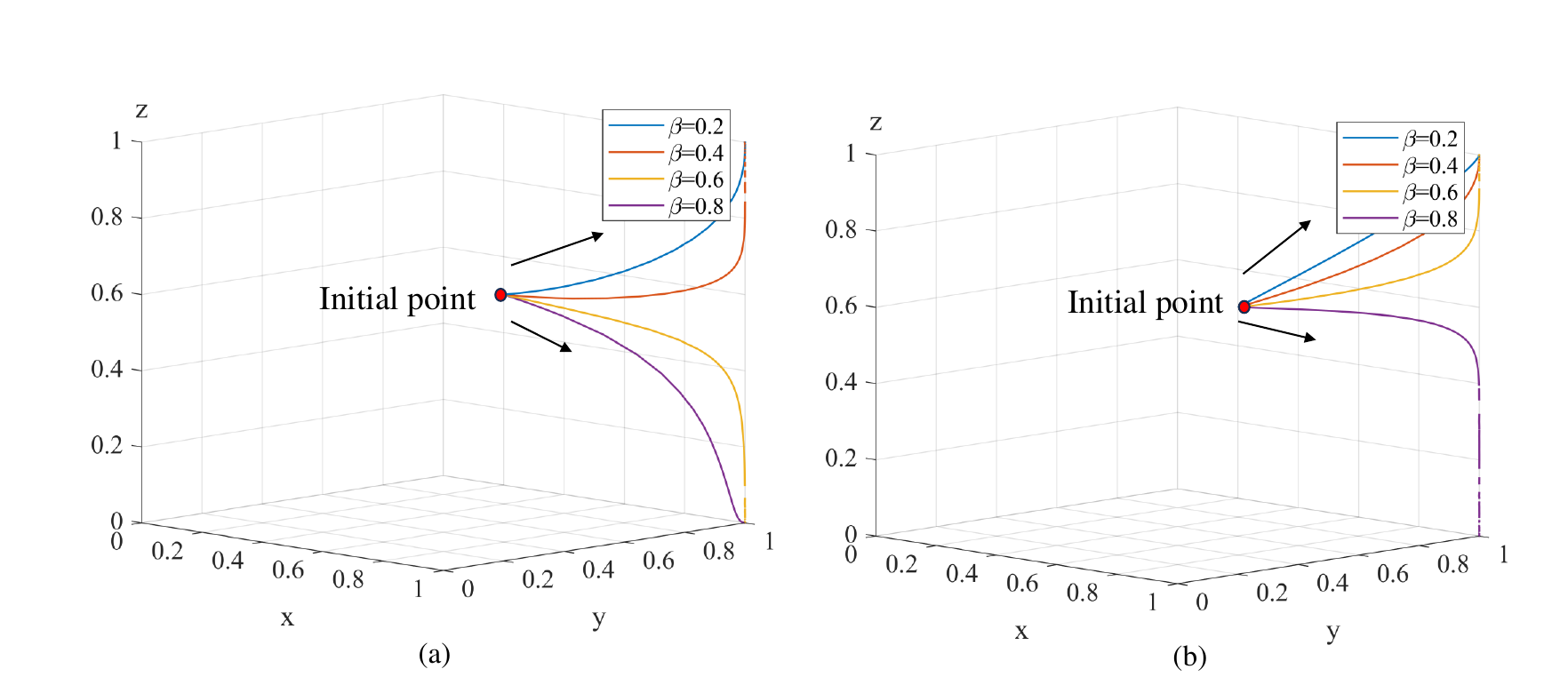}
\caption{$\beta$ evolutionary game simulation of array 1 and array 2}
\label{Fig15Fig16}
\end{figure}

According to these figures: under array 1 (see Figure \ref{Fig15Fig16}a) and array 2 (see Figure \ref{Fig15Fig16}b), the influence of platform subsidy intensity $\beta$  on system evolution results is the same as the platform subsidy intensity $\alpha$ 's, which will not be repeated here.

\subsubsection{Shipper service level  $u_{I}$ (referring to the service level enjoyed by the shipper)}
In order to study the influence of changing shipper service level $u_{I}$ on the system evolution results under $E_{7}$  and $E_{8}$, the other parameters of array 1 and array 2 are kept unchanged, and the value of $u_{I}$ is changed by 0.2:0.2:0.8.The simulation results are shown in Figure \ref{Fig17Fig18}:

\begin{figure}[h!]
\centering
\includegraphics[width=.70\textwidth]{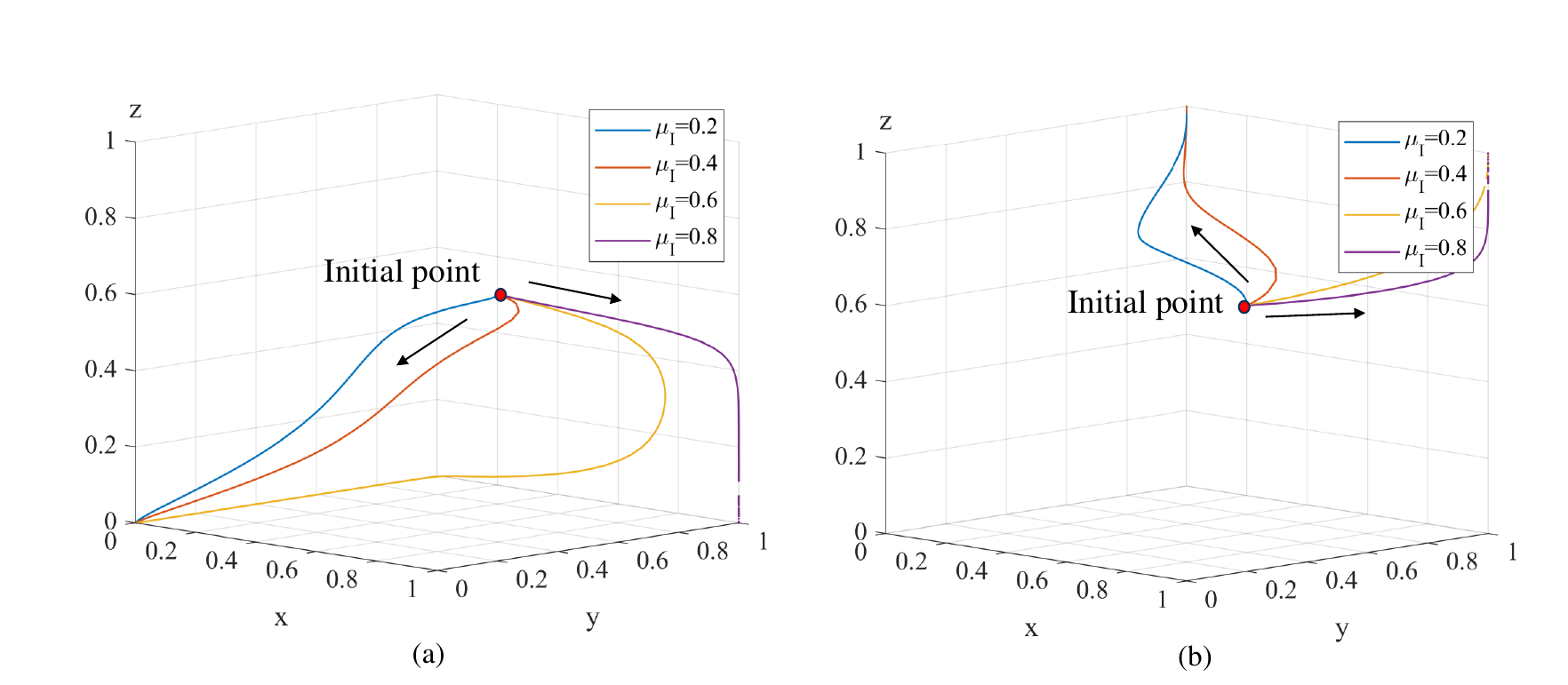}
\caption{$u_{I}$ evolutionary game simulation of array 1 and array 2}
\label{Fig17Fig18}
\end{figure}
    
It can be seen from Figure \ref{Fig17Fig18}a: under array 1, the critical value of $u_{I}$   is between 0.6 and 0.8. When $u_{I}$  is smaller than this critical value, the shipper, carrier, and platform finally select non-acceptance, non-acceptance, and no subsidy respectively, the dynamic system evolutionary stable point is (0,0,0); when $u_{I}$ is larger than this critical value, their choice become acceptance, acceptance and no subsidy respectively, so we have (1,1,0). According to the change rate of the evolution curve, it can also be seen that when the value of $u_{I}$ decreases from 0.6, the rate at which the dynamic system evolves to (0,0,0) speeds up. The lower the service level the shipper enjoys, the faster the shipper selects non-acceptance. It can be seen from Figure \ref{Fig17Fig18}b: under the condition of array 2, the critical value of $u_{I}$  is between 0.4 and 0.6. When $u_{I}$ is smaller than this critical value, the shipper, carrier, and platform finally select non-acceptance, acceptance, and subsidy respectively, and then the point is (0,1,1); when $u_{I}$ is larger than this critical value, the choice becomes acceptance, acceptance, and subsidy, with the point being (1,1,1). According to the change rate of the evolution curve, it can also be seen that when the value of $u_{I}$   is 0.6 instead of 0.8, the dynamic system evolves to (1,1,1) at a faster rate, therefore, appropriately improving the shipper service level is conducive to facilitate shipper to select acceptance.

\subsubsection{Carrier service level $u_{p}$   (referring to the service level enjoyed by the carrier)}
In order to study the influence of changing carrier service level $u_{p}$ on the system evolution results under $E_{7}$ and $E_{8}$ , the other parameters of array 1 and array 2 are kept unchanged, and the value of $u_{p}$  is changed by 0.2:0.2:0.8.The simulation results are shown in Figure \ref{Fig19Fig20}:

\begin{figure}[!htbp]
\centering
\includegraphics[width=.70\textwidth]{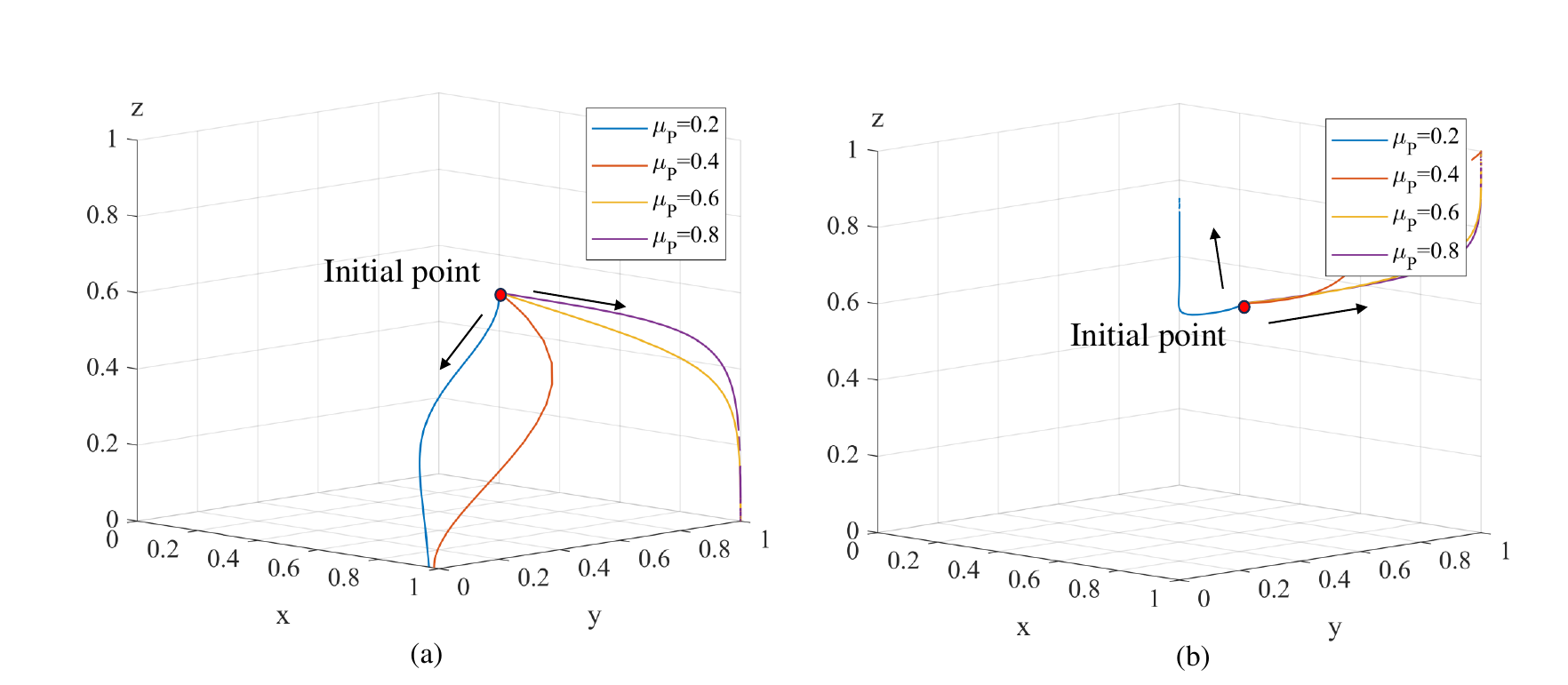}
\caption{$u_{P}$ evolutionary game simulation of array 1 and array 2}
\label{Fig19Fig20}
\end{figure}

Figure \ref{Fig19Fig20}a shows that for array 1, the critical value of is between 0.4 and 0.6. If  
$u_{p}$ is lower than this critical value, the shipper, carrier, and platform choose acceptance, non-acceptance, and no subsidy, respectively, resulting in a dynamic system evolutionary stable point of (1,0,0). If $u_{p}$ is higher than this critical value, they choose acceptance, acceptance, and no subsidy, respectively, resulting in a dynamic system evolutionary stable point of (1,1,0). As the value of $u_{p}$ decreases from 0.4, the dynamic system evolves to (1,0,0) faster. A lower carrier service level results in a faster shipper choice of non-acceptance. Similarly, Figure \ref{Fig19Fig20}b shows that for array 2, the critical value of $u_{p}$ is between 0.2 and 0.4. If $u_{p}$ is lower than this critical value, shipper, carrier, and platform choose acceptance, non-acceptance, and subsidy, respectively, resulting in a dynamic system evolutionary stable point of (1,0,1). If $u_{p}$ is higher than this critical value, they choose acceptance, acceptance, and subsidy, respectively, resulting in a dynamic system evolutionary stable point of (1,1,1). As the value of $u_{p}$ increases from 0.4, the dynamic system evolves to (1,1,1) slower. Hence, improving the carrier service level reasonably can help facilitate the shipper to select acceptance.

\subsubsection{The platform's fairness consideration intensity $\eta$ }
In order to study the influence of changing platform's fairness consideration intensity $\eta $ on the system evolution results under $E_{7}$  and $E_{8}$ , the other parameters of array 1 and array 2 are kept unchanged, and the value of    $\eta $ is changed by 0.2:0.2:0.8. The simulation results are shown in Figure \ref{Fig21Fig22}:

\begin{figure}[!htbp]
\centering
\includegraphics[width=.70\textwidth]{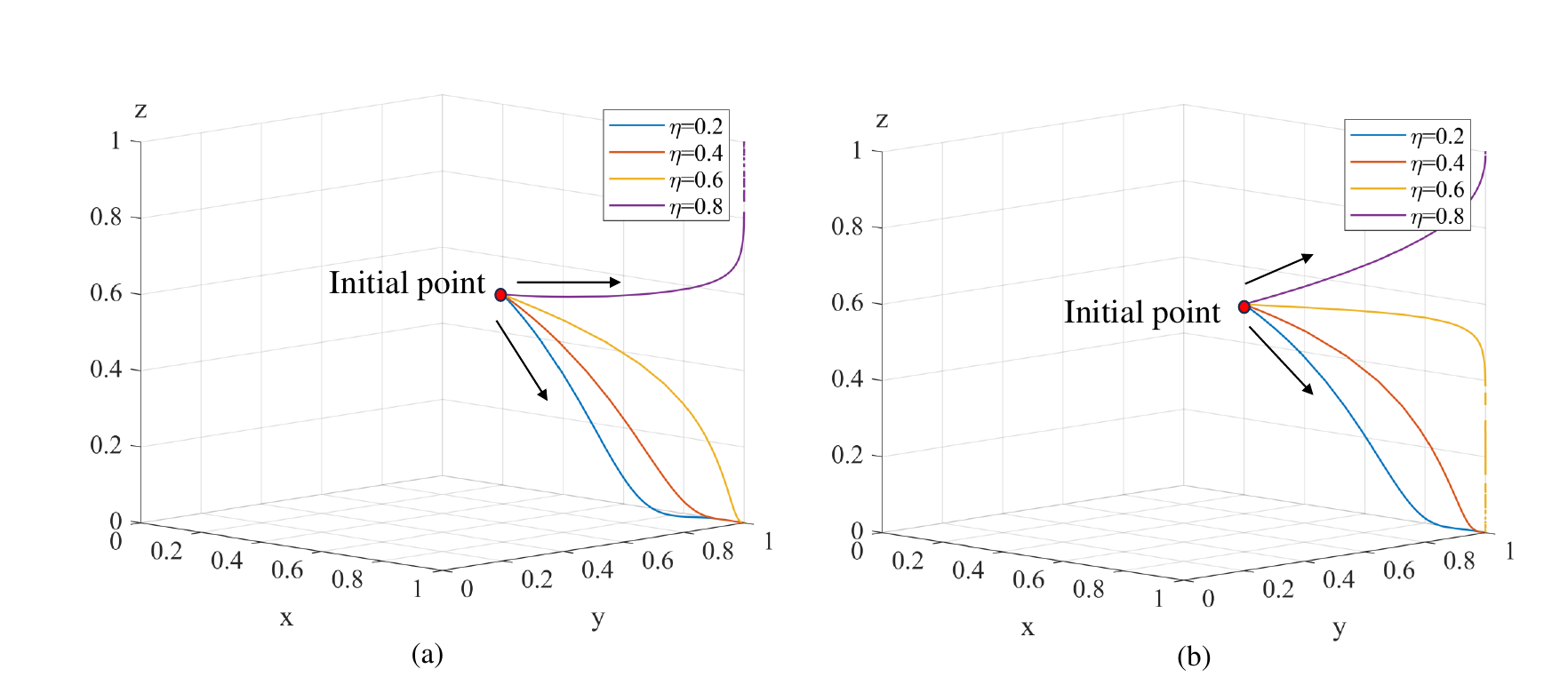}
\caption{$\eta$ evolutionary game simulation of array 1 and array 2}
\label{Fig21Fig22}
\end{figure}

Figure \ref{Fig21Fig22}a shows that for array 1, the critical value of is between 0.6 and 0.8. If   $\eta $ is lower than this critical value, the shipper, carrier, and platform choose acceptance, acceptance, and no subsidy, respectively, resulting in a dynamic system evolutionary stable point of (1,1,0). If $\eta $   is higher than this critical value, they choose acceptance, acceptance, and subsidy, respectively, so the point is (1,1,1). As $\eta $  decreases from 0.6, the dynamic system evolves to (1,1,0) faster, indicating that the platform tends to select no subsidy more and more. Based on the income matrix in Section \ref{6.1}, the system's evolution pattern under the platform's fairness consideration intensity is further analyzed. If the platform's fairness consideration intensity is relatively low, the reputation revenue brought by the subsidy strategy cannot make up for the management cost caused by the subsidy strategy, and the platform increasingly tends to select non-subsidy. This causes the shipper and the carrier to lose the subsidy income and also bear the platform's order commission costs. However, if the platform's fairness consideration intensity is higher than the critical value, the reputation revenue brought by the subsidy strategy can bring positive revenue growth even though management costs exist. The shipper and carrier can additionally obtain subsidy income, creating a win-win situation for the entire vehicle-cargo matching system. Thus, properly considering fairness can make the dynamic system reach a stable state of (1,1,1) and improve the three matching parties' overall income, which is more conducive to the online vehicle-cargo matching system's continuous and long-term operation. The influence of array 2 on system evolution results is the same as array 1's, as shown in Figure \ref{Fig21Fig22}b.

\section{Conclusions and future research}\label{6.0}
This paper proposes a tripartite vehicle-cargo matching evolutionary game framework consisting of a shipper, carrier, and platform in the online freight stowage industry. The framework explores the effect of matching fairness concerns on the online vehicle-cargo matching operation system using intuitionistic fuzzy sets theory. The parameter conditional restriction of expectant evolutionary stable equilibrium state under the cost-sharing mechanism is also analyzed. 

To reflect the unequal perception of bilateral vehicle-cargo matching subjects, the paper models the satisfaction realization degree of unilateral matching subjects using the intuitionistic fuzzy sets' membership, non-membership, and uncertainty information. Matching fairness concern is integrated into this model to obtain a satisfactory vehicle-cargo matching scheme through an adaptive interactive algorithm. The paper uses a tripartite evolutionary game-theoretic setting to examine the implications of fairness concerns on the online vehicle-cargo matching operation system. Furthermore, a cost-sharing contract is introduced to explore its impact on the evolutionary stable equilibrium state, making the research scenario setting more realistic.

\subsection{Main Conclusion}
This study yields several noteworthy findings:

The consideration of fairness in vehicle-cargo matching influences the resulting scheme, enabling decision-makers to create more flexible and coordinated strategies. The application of intuitionistic fuzzy set theory to transform the vehicle-cargo matching model outperforms traditional methods in balancing matching satisfaction and fairness. This is achieved by incorporating membership, non-membership, and uncertainty information pertaining to the satisfaction realization degree of unilateral matching subjects.

The formulation and implementation of vehicle-cargo matching schemes that address fairness concerns significantly impact the evolution of online vehicle-cargo matching operations. Tripartite evolutionary game simulation results indicate that the relative strength of various influencing factors, including fairness concerns, leads to distinct evolutionary path outcomes.

In order to achieve the desired system's evolutionary stable equilibrium state, certain conditions must be met. According to Lyapunov's first law, if a carrier accepts the recommended vehicle-cargo matching scheme while a shipper chooses non-acceptance, the shipper must assume the entire waiting response time cost, and vice versa.

\subsection{Managerial Insights}\label{8.0}
Addressing fairness concerns in vehicle-cargo matching leads to service innovation in online vehicle-cargo matching mechanisms. By modeling the unequal perceptions of bilateral vehicle-cargo matching subjects, the platform influences stakeholders' strategic selection. To meet decision-makers' expectations, platforms can develop satisfactory vehicle-cargo matching schemes with fairness concerns utilizing intuitionistic fuzzy logic.

Platform subsidy intensity, service level, and matching fairness concern intensity each have distinct effects on the evolutionary stable equilibrium state of the online vehicle-cargo matching system. To better serve stakeholders in the formulation and implementation of vehicle-cargo matching schemes, this study recommends strict control over platform subsidy intensity, reasonable adjustments to the platform's service levels for shippers and carriers, and proper consideration of matching fairness concern intensity. These measures will facilitate the evolution of the vehicle-cargo matching system toward the desired operational evolutionary stable equilibrium state.

The cost-sharing contract, which exists in the online freight stowage operational reality, influences the tripartite evolution of the online vehicle-cargo matching system. Decision-makers should determine the effective range of cost sharing to achieve the purpose of online vehicle-cargo matching system operation supervision.

\subsection{Limitations and future research}
Market fluctuation leads to imbalances in supply and demand within online vehicle-cargo matching transactions, making it intriguing to investigate their impact on decision-making within vehicle-cargo matching. Due to the influence of cognition and experience, shippers and carriers exhibit characteristics of “bounded rationality” when making matching decisions. As such, it would be beneficial to enhance the bilateral vehicle-cargo matching model by incorporating fairness concerns. Future research directions may involve exploring the specific details of interactive algorithms, which can serve as supplementary areas of study.



\begin{appendices}
\section{ }\label{Appendix A.} 


\setcounter{equation}{0}
\renewcommand\theequation{A.\arabic{equation}}
The scoring functions used in Section \ref{3.2}.
 \begin{equation}\label{A.1} 
S\left( A \right)=  \frac{\exp\left \{ u_{A}(x)-v_{A}(x)+H_{I}(x)\pi _{A}(x)  \right \} }{1+\pi_{A}^{2}(x) } 
\end{equation}                                                    
\noindent where:
\begin{equation}\label{shipperdeliverydate}
\pi_{A}\left ( x \right ) =1-u_{A}(x)-v_{A}(x)
\end{equation}
\begin{equation}\label{shipperdeliverydate}
H_{I}(x)=H(x)I(x)
\end{equation}
\begin{equation}\label{shipperdeliverydate}
H(x)=\frac{H(u_{A}(x),v_{A}(x))+H(v_{A}(x),u_{A}(x))}{2}
\end{equation}
\begin{equation}\label{shipperdeliverydate}
H(u_{A}(x),
v_{A}(x))=u_{A}(x)\log_{2}\frac{u_{A}(x)}{(u_{A}(x)+v_{A}(x))/2} +(1-u_{A}(x))\log_{2}\frac{1-u_{A}(x)}{1-(u_{A}(x)+v_{A}(x))/2}
\end{equation}
\begin{equation}\label{shipperdeliverydate}
I(x)=
\begin{cases}
1  \quad\quad u_{A}(x)>v_{A}(x)\\
0  \quad\quad u_{A}(x)=v_{A}(x) \\
-1 \quad u_{A}(x)<v_{A}(x)
\end{cases}
\end{equation}
                                           
 \noindent$u_{A}(x)$,$v_{A}(x)$,$\pi_{A}(x)$ respectively represent the degree of membership, non-membership, and hesitation; $H(x)$ is the intuitionistic fuzzy cross-entropy, which is used to represent the interaction between the degrees of membership and non-membership.


 

\section{ } \label{Appendix B.} 

\setcounter{equation}{0}
\renewcommand\theequation{B.\arabic{equation}}

The optimization model determines the reliability indicator evaluation criteria weight in Section \ref{3.2}.

\begin{equation}\label{3.12}
\max V(w)={\textstyle \sum_{i=1}^{p}}  {\textstyle \sum_{j=1}^{q}} v_{ij} w_{j} 
\end{equation}
\begin{equation}\label{3.13}
\hbox{s.t.}~ {\textstyle \sum_{j=1}^{q}}w_{j}^{2}=1,w_{j}\ge0,j=1,2,\dots,q
\end{equation}
\noindent where $v_{ij} $ is the criterion evaluation value and $w_{j}$ is the criterion weight. For the above nonlinear optimization model, the Lagrange method is used to get the optimal criterion weight:
\begin{equation}\label{weightdetermine1}
w_{j}=\frac{1}{ {\textstyle \sum_{j=1}^{q}} {\textstyle \sum_{i=1}^{p}v_{ij}}}  {\textstyle \sum_{i=1}^{p}}v_{ij},j=1,2,\cdots,q 
\end{equation}

\section{ } \label{Appendix C.}

\setcounter{equation}{0}
\renewcommand\theequation{C.\arabic{equation}}
The Relative Superiority Degree of Adjacent Targets (RSDAT) method in Section \ref{3.4}.

First, the indicators' importance is sorted and recorded in descending order as $o_{1} \succ o_{2}\cdots \succ o_{k}$ , then, the binary comparison of indicators' importance is made,  $I_{ab}$ shows the relative importance scale value, see Table \ref{degreetable}:

\renewcommand\thetable{C.\arabic{table}}    
\setcounter{table}{0}

\begin{table}[!htp]
\begin{center}
\caption{Determination of scale value}\label{degreetable}
\begin{tabular}{cc}
\hline
Degree of importance                   & Scale value                        \\ \hline
$O_{a}$ is more important than $O_{b}$ & 0.5\textless{}$I_{ab}$\textless{}1 \\
$O_{b}$ is more important than $O_{a}$ & 0\textless{}$I_{ab}$\textless{}0.5 \\
$O_{a}$ is as important as $O_{b}$     & $I_{ab}$=0.5                       \\ \hline
\end{tabular}
\end{center}
\end{table}

 In addition,$I_{ab}=1-I_{ab}$, $I_{a,a+1}(a=1,2,\cdots,k-1)$ shows the relative importance scale value of adjacent targets, given by the decision maker. Based on the above definitions, rules and recursion principles, the relative importance scale value of any two indicators can be obtained from $I_{a,a+1}$, namely:
\begin{equation}\label{weightdetermine1}
I_{ab}=I_{a,b-1}+2(I-I_{a,b-1})(I_{b-1,b}-0.5)
\end{equation}  

$I$ shows the indicators' ordered binary comparison matrix according to complementary relationships:

\begin{equation}\label{matrix}
I=\left [ I_{ab} \right ]_{k\times k} =\begin{bmatrix}
  I_{11}& I_{12} & \cdots  &I_{1k} \\
  I_{21}& I_{22} & \cdots  &I_{2k}  \\
  \vdots & \vdots  & \vdots   &\vdots  \\
  I_{k1}& I_{k2} & \cdots  &I_{kk} 
\end{bmatrix}
\end{equation}
                                                                     
\noindent$w_{a}^{'}$ represents the relative importance of corresponding indicator, which is equal to the sum of each row in the matrix $I$ excluding its own comparison:
\begin{equation}\label{weightdetermine1}
w_{a}^{'}={\textstyle \sum_{b=1}^{k}} I_{ab}
  \qquad  a,b=1,2,\cdots,k,a\ne b 
\end{equation}
                                                          
Normalizing $w_{a}^{'}$ 
\begin{equation}
w_{a}=\frac{w_{a}^{'}}{ {\textstyle \sum_{a=1}^{}} w_{a}^{'}}  \quad  a=1,2\cdots,k  \end{equation}

\end{appendices}

\end{document}